\def\R{{\mathbb R}}
\def\N{\mathbb{N}}
\def\Lip{\operatorname{Lip}}
\def\Ker{\operatorname{Ker}}
\newtheorem{prop}{\bf Proposition}[section]
\newtheorem{thm}[prop]{\bf Theorem}
\newtheorem{cor}[prop]{\bf Corollary}
\newtheorem{lem}[prop]{\bf Lemma}
\newtheorem{rmk}[prop]{\it Remark}
\begin{document}

\title[Quasi-trees, Lipschitz free spaces, and actions on $\ell^1$]{{\bf\Large Quasi-trees, Lipschitz free spaces, and actions on $\ell^1$}}

%%    Information for first author
\author[I. Vergara]{Ignacio Vergara}
%    Address of record for the research reported here
\address{Departamento de Matem\'atica y Ciencia de la Computaci\'on, Universidad de Santiago de Chile, Las Sophoras 173, Estaci\'on Central 9170020, Chile}

\email{ign.vergara.s@gmail.com}
%    \thanks will become a 1st page footnote.
\thanks{This work is supported by the FONDECYT project 3230024 and the ECOS--ANID project 23003 \emph{Small spaces under action}}

%    General info
\makeatletter
\@namedef{subjclassname@2020}{%
  \textup{2020} Mathematics Subject Classification}
\makeatother

\subjclass[2020]{Primary 22D55; Secondary 20F65, 20F67, 51F30, 46B03}
%
%\date{\today}

\keywords{Quasi-trees, Property (QT), Lipschitz free spaces, actions on $\ell^1$}

\begin{abstract}
We show that the Lipschitz free space of a countable simplicial quasi-tree is isomorphic to $\ell^1$. As a consequence, every finitely generated group with Property (QT) of Bestvina--Bromberg--Fujiwara has a proper uniformly Lipschitz affine action on $\ell^1$ with quasi-isometrically embedded orbits. We also show that $3$-manifold groups admit proper uniformly Lipschitz affine actions on $\ell^1$.
\end{abstract}

%\maketitle

\begingroup
\def\uppercasenonmath#1{} % this disables uppercasing title
\let\MakeUppercase\relax % this disables uppercasing authors
\maketitle
\endgroup

\section{{\bf Introduction}}

The study of group actions on $L^1$ spaces has received increasing attention in recent years, motivated mainly by its connections with Kazhdan's Property (T) and the Haagerup property in the isometric case; see \cite{ChDrHa} for details. It has since become apparent that the more general framework of uniformly Lipschitz actions is much less restrictive and allows for quite different behaviours.

It was shown in \cite{Ver} that a group acting properly on a finite product of quasi-trees has a proper uniformly Lipschitz affine action on a subspace of $L^1$. This class includes residually finite hyperbolic groups and mapping class groups; see \cite{BeBrFu}. This result was later extended to all hyperbolic groups in \cite{Ver2}. In \cite{DruMac}, Dru\c{t}u and Mackay proved that residually finite hyperbolic groups and mapping class groups admit proper uniformly Lipschitz affine actions on $\ell^1$. Moreover, they obtain actions with quasi-isometrically embedded orbits for the former class, and a slightly weaker condition for the latter; see \cite[Theorem 1.6]{DruMac} and \cite[Theorem 1.8]{DruMac} for more precise statements.

More recently, Gartland \cite{Gar} was able to exploit a very interesting connection that exists between these topics and the realm of Lipschitz free Banach spaces. More concretely, he showed that the Lipschitz free space of a hyperbolic group --viewed as a pointed metric space-- is isomorphic to $\ell^1$. As a consequence, he obtains proper uniformly Lipschitz actions on $\ell^1$ with quasi-isometrically embedded orbits, extending Dru\c{t}u and Mackay's result to all hyperbolic groups. These ideas had been previously explored in \cite{CaCuDo}.

In this paper, we follow Gartland's strategy for groups acting on (products of) quasi-trees. A metric space $X$ is said to be a quasi-tree if it is quasi-isometric to a simplicial tree. If, in addition, $X$ is itself a graph, we say that it is a \emph{simplicial quasi-tree}.

Let $\ell^1$ denote the Banach space of sequences with summable absolute value. Our first result is the following; see Section \ref{S_Lipfree} for details on Lipschitz free spaces.

\begin{thm}\label{Thm_LFSQT}
The Lipschitz free space of a countable simplicial quasi-tree is isomorphic to $\ell^1$.
\end{thm}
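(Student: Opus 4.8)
The plan is to prove that $\mathcal{F}(X)$ is isomorphic to an infinite-dimensional complemented subspace of $\ell^1$, and then to invoke Pe\l czy\'nski's theorem that $\ell^1$ is prime: every infinite-dimensional complemented subspace of $\ell^1$ is isomorphic to $\ell^1$. The model to keep in mind is the exact case of a countable simplicial tree $T$: orienting each edge and sending an oriented edge $\{u,w\}$ to $\delta_w-\delta_u$ identifies $\mathcal{F}(T)$ isometrically with $\ell^1(E)$, where $E$ is the edge set. This works precisely because geodesics in $T$ are unique and the metric is additive along them, so that $\delta_x-\delta_{x_0}$ decomposes canonically as the $\ell^1$-sum of the edges of the geodesic from the base point $x_0$ to $x$. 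The whole difficulty is to manufacture a robust substitute for this edge decomposition on a quasi-tree.

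First I would record that the quasi-isometry $f\colon X\to T$ furnished by the definition of a quasi-tree, together with a quasi-inverse $g\colon T\to X$, are genuinely Lipschitz: since $X$ and $T$ are simplicial graphs, distinct points are at distance at least $1$, so the additive constant in the quasi-isometry inequalities is absorbed and coarse-Lipschitz maps become Lipschitz. By functoriality of $\mathcal{F}$ this yields bounded operators $\hat f\colon\mathcal{F}(X)\to\mathcal{F}(T)$ and $\hat g\colon\mathcal{F}(T)\to\mathcal{F}(X)$. The temptation is to conclude that $\hat g\hat f$ is close to the identity; this is exactly where the main obstacle lies, and it fails. The composition $g\circ f$ agrees with the identity of $X$ only up to a bounded error, and a bounded displacement of each point is not a small perturbation of $\mathcal{F}(X)$: summed over the infinitely many points of $X$ it can change the free-space norm by an unbounded factor. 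Equivalently, $X$ is not bi-Lipschitz to $T$---the tree collapses the order-one (small-scale) structure that the free space sees---so the tree's $\ell^1$-structure cannot simply be transported.

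To overcome this I would argue at all scales simultaneously, using Manning's characterization of quasi-trees by the bottleneck property. The idea is to replace the edges of $T$ by a family of \emph{coarse walls}: approximate cut sets of $X$, produced by the bottleneck midpoints, whose associated cut metric is bi-Lipschitz equivalent to $d_X$. Each such wall is encoded by a monotone $\R$-valued $1$-Lipschitz function on $X$, so the resulting map $\Phi\colon X\to\ell^1(\text{walls})$ has the decisive feature that its \emph{linearization} $\hat\Phi\colon\mathcal{F}(X)\to\ell^1$ is an isomorphic embedding: each coordinate factors through $\mathcal{F}(\R)=L^1$, and the contributions sum in $\ell^1$ exactly as the edges did for a tree. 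It is essential here that the target is this tree/cut $\ell^1$-structure, since a mere bi-Lipschitz embedding of the metric space $X$ into $\ell^1$ would only embed $\mathcal{F}(X)$ into the much larger space $\mathcal{F}(\ell^1)$. A bounded left inverse, hence a projection, is obtained from a Lipschitz retraction adapted to the coarse tree skeleton; the uniformly bounded local pieces that the coarse structure cannot resolve are handled separately, each contributing a finite-dimensional $\ell^1_n$-summand, and all the pieces are reassembled as an $\ell^1$-sum across scales.

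Finally, the construction exhibits $\mathcal{F}(X)$ as an infinite-dimensional complemented subspace of $\ell^1$, so Pe\l czy\'nski's theorem gives $\mathcal{F}(X)\cong\ell^1$. The step I expect to be the genuine obstacle is the second one made precise: converting the coarse, additive-error control supplied by the bottleneck property into the exact linear identities needed for $\hat\Phi$ to be bounded below and complemented. Should the projection only be available into an ambient $L^1$-space rather than $\ell^1$ directly, I would invoke Kalton's results that the free space of a uniformly discrete metric space has the Schur property and the Radon--Nikod\'ym property to exclude the non-$\ell^1$ complemented subspaces of $L^1$ and still land on $\ell^1$.
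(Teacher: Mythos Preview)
Your proposal correctly identifies the central difficulty---that a quasi-isometry $X\to T$ does not induce an isomorphism of free spaces because bounded displacements are not small perturbations in the free-space norm---but it does not resolve it. The coarse-walls construction is only sketched: you assert that the bottleneck midpoints produce a family of approximate cuts whose $\ell^1$-sum is bi-Lipschitz to $d_X$, that the linearization $\hat\Phi$ lands in $\ell^1$ (rather than in $\mathcal{F}(\ell^1)$ or an $L^1$-sum) and is bounded below, and that a Lipschitz retraction furnishes a projection. None of these claims is established, and you yourself flag the second as ``the genuine obstacle.'' In particular, a wall encoded by a $1$-Lipschitz function $X\to\R$ induces a map $\mathcal{F}(X)\to\mathcal{F}(\R)\cong L^1(\R)$, not a map to $\R$; summing over walls gives a target of the form $\bigoplus_{\ell^1} L^1$, and extracting a copy of $\ell^1$ together with a bounded projection back onto it is exactly the work that remains. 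The Kalton fallback would only help once $\mathcal{F}(X)$ is already known to sit complementedly in some $L^1$, which you have not shown either.

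The paper takes a different and complete route that sidesteps the wall construction entirely. Instead of a quasi-isometry to a simplicial tree, it uses Kerr's theorem to produce a $(1,\Delta)$-\emph{rough isometry} $g\colon X^{(0)}\to Y$ into an $\R$-tree $Y$, with the additional feature that $g(X^{(0)})$ contains all branching points of $Y$. The absence of multiplicative distortion is decisive: it forces every fibre $g^{-1}(y)$ to have diameter at most $\Delta$, so any section $h\colon g(X^{(0)})\to X^{(0)}$ is automatically Lipschitz, and one gets a genuine splitting $\mathcal{F}(X^{(0)})\approx\mathcal{F}(g(X^{(0)}))\oplus_1\Ker(\phi_g)$. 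The first summand is $\ell^1$ by Godard's description of free spaces of subsets of $\R$-trees (this is where containment of the branching points is used). For the second, the bounded-fibre property gives $\tfrac{1}{2}\Lip(f)\leq\|f\|_\infty\leq\Delta\Lip(f)$ on $\Ker(\phi_h^*)$, whence $\Ker(\phi_g)^*\approx\ell^\infty$ and so $\Ker(\phi_g)\approx\ell^1$. No primality, Schur, or RNP argument is needed. What your outline is missing, and what Kerr's construction supplies, is precisely a map to a tree-like target with zero multiplicative distortion and controlled branching; this is what converts the additive coarse control into an exact linear splitting.
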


There are two main ingredients in the proof of Theorem \ref{Thm_LFSQT}. The first one is Kerr's construction of rough isometries from quasi-trees into $\R$-trees; see \cite[Proposition 1.2]{Ker}. The second one is Godard's description of the Lipschitz free spaces of subspaces of $\R$-trees \cite{God}.

We point out that, for locally finite quasi-trees, Theorem \ref{Thm_LFSQT} can also be obtained from Kerr's construction, together with \cite[Theorem A]{Gar} and \cite[Lemma 3.4]{Gar}. However, for applications, the local finiteness assumption is too strong; see \cite[\S 10]{But} for a more detailed discussion of this fact. Hence we need a new argument in order to deal with locally infinite quasi-trees.

Next, we turn to group actions. Unless otherwise specified, whenever $X$ is defined as the product of $N$ metric spaces $(X_1,d_1),\ldots,(X_N,d_N)$, it will be endowed with the $\ell^1$ sum of the metrics:
\begin{align}\label{l1_metric}
d(x,y)=\sum_{i=1}^N d_i(x_i,y_i),
\end{align}
where $x=(x_1,\ldots,x_N)$ and $y=(y_1,\ldots,y_N)$. In the particular case when every $(X_i,d_i)$ is a connected graph endowed with the edge-path distance, the formula \eqref{l1_metric} gives exactly the edge-path distance on $X$ for its natural graph structure.

Every isometric action of a group on a metric space $X$ gives rise to an isometric action on its Lipschitz free space $\mathcal{F}(X)$; see Lemma \ref{Lem_act_F(X)}. This fact, together with Theorem \ref{Thm_LFSQT}, allows us to obtain the following.

\begin{thm}\label{Thm_act_l1}
Let $G$ be a finitely generated group acting by isometries on a product of quasi-trees $X=X_1\times\cdots\times X_N$, and let $o\in X$. Then $G$ admits a uniformly Lipschitz affine action $\sigma$ on $\ell^1$ such that
\begin{align*}
\|\sigma(s)0\|_1\geq 	\frac{1}{C}d_X(s\cdot o,o) - C,\quad\forall s\in G,
\end{align*}
for some constant $C>0$.
\end{thm}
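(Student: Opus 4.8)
The plan is to combine Theorem~\ref{Thm_LFSQT} with the functoriality of the Lipschitz free space construction. First I would handle the product $X=X_1\times\cdots\times X_N$. Each $X_i$ is a quasi-tree, and by replacing it with (say) its vertex set equipped with the edge-path metric, or by invoking the fact that a quasi-tree is quasi-isometric to a simplicial quasi-tree, I would arrange that Theorem~\ref{Thm_LFSQT} applies to give $\mathcal{F}(X_i)\cong\ell^1$. The natural object to consider is $\mathcal{F}(X)$ for the product metric \eqref{l1_metric}; since the metric on $X$ is the $\ell^1$ sum of the $d_i$, the key structural point I would use is that the Lipschitz free space of an $\ell^1$ product decomposes, up to isomorphism, as the $\ell^1$ sum $\mathcal{F}(X_1)\oplus_1\cdots\oplus_1\mathcal{F}(X_N)$ (or at least embeds complementably into such a sum). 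A finite $\ell^1$ sum of copies of $\ell^1$ is again isomorphic to $\ell^1$, so $\mathcal{F}(X)\cong\ell^1$.

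Next I would implement the action. By Lemma~\ref{Lem_act_F(X)} (cited in the excerpt), the isometric $G$-action on $X$ induces an isometric action on $\mathcal{F}(X)$; fixing the basepoint $o$, this is a linear isometric representation $\pi\colon G\to\mathcal{O}(\mathcal{F}(X))$ under which the canonical isometric embedding $\delta\colon X\to\mathcal{F}(X)$, $\delta(x)=\delta_x-\delta_o$, is equivariant in the sense $\pi(s)\delta(x)=\delta(s\cdot x)$. Transporting everything through a fixed isomorphism $T\colon\mathcal{F}(X)\to\ell^1$, I obtain a uniformly Lipschitz linear representation $\rho(s)=T\pi(s)T^{-1}$ of $G$ on $\ell^1$, uniformly Lipschitz because $T$ and $T^{-1}$ are bounded and each $\pi(s)$ is a linear isometry. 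I would then define the affine action by adding the natural cocycle coming from the orbit of the basepoint: set $\sigma(s)v=\rho(s)v + b(s)$, where $b(s)=T\,\delta(s\cdot o)=T(\delta_{s\cdot o}-\delta_o)$. The cocycle identity follows from equivariance of $\delta$ together with $\delta(s\cdot o)=\pi(s)\delta(o)+\text{(shift)}$; concretely $b(st)=\rho(s)b(t)+b(s)$, which is exactly what is needed for $\sigma$ to be an affine action.

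It remains to verify the lower bound on $\|\sigma(s)0\|_1$. By construction $\sigma(s)0=b(s)=T\delta(s\cdot o)$, so
\begin{align*}
\|\sigma(s)0\|_1=\|T\delta(s\cdot o)\|_1\geq \frac{1}{\|T^{-1}\|}\,\|\delta(s\cdot o)\|_{\mathcal{F}(X)}=\frac{1}{\|T^{-1}\|}\,d_X(s\cdot o,o),
\end{align*}
using that $\delta$ is an isometric embedding, so $\|\delta(s\cdot o)\|_{\mathcal{F}(X)}=d_X(s\cdot o,o)$. Taking $C=\max\{\|T^{-1}\|,1\}$ (the additive $-C$ term gives slack and in fact one may take it to be $0$ here) yields the claimed inequality. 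The genuinely substantive input is Theorem~\ref{Thm_LFSQT}; everything after it is soft functoriality.

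The step I expect to be the main obstacle is the reduction from the product of arbitrary quasi-trees to a setting where Theorem~\ref{Thm_LFSQT} literally applies, namely showing $\mathcal{F}(X)\cong\ell^1$ for the $\ell^1$-product metric. Two points need care: first, the theorem is stated for \emph{simplicial} quasi-trees, so I must pass from each metric quasi-tree $X_i$ to a quasi-isometric graph without disturbing the $G$-action too much, or else directly prove that a quasi-isometry induces an isomorphism of the corresponding free spaces; second, I must justify the $\ell^1$-decomposition $\mathcal{F}(X_1\times\cdots\times X_N)\cong\bigoplus_1\mathcal{F}(X_i)$ for the sum metric. The latter is where I would need to argue most carefully, since in general the Lipschitz free space of a product is only a quotient of the $\ell^1$ sum of the factors' free spaces; I would either invoke a known decomposition result for $\ell^1$-products, or exploit that a finite $\ell^1$ sum of spaces each isomorphic to $\ell^1$ is isomorphic to $\ell^1$ and that $\mathcal{F}(X)$ is complemented appropriately, to conclude $\mathcal{F}(X)\cong\ell^1$ regardless.
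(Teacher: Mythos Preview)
Your proposal has a genuine gap at precisely the place you yourself flag as the main obstacle. The decomposition $\mathcal{F}(X_1\times\cdots\times X_N)\approx\mathcal{F}(X_1)\oplus_1\cdots\oplus_1\mathcal{F}(X_N)$ for the $\ell^1$-product metric does \emph{not} hold in general, and there is no routine complementation argument giving $\mathcal{F}(X)\approx\ell^1$: already for the product of two copies of $\Z$ this is well beyond the techniques available here. Your alternative suggestion, that a quasi-isometry should induce an isomorphism of free spaces, also fails outright: $\R$ and $\Z$ are quasi-isometric, yet $\mathcal{F}(\R)\cong L^1$ and $\mathcal{F}(\Z)\cong\ell^1$ are not isomorphic. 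So neither route to ``$\mathcal{F}(X)\approx\ell^1$'' goes through, and with it the rest of your argument collapses.

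The paper sidesteps this entirely by never considering $\mathcal{F}(X)$. It first passes to a finite-index subgroup $H\leq G$ that preserves each factor (the $G$-action on the product may permute the $X_i$, so this step is necessary), then uses Proposition~\ref{Prop_Manning} factor by factor to replace each $X_i$ by a countable simplicial quasi-tree $\tilde{X}_i$ carrying an isometric $H$-action, applies Lemma~\ref{Lem_act_F(X)} to each $\tilde{X}_i$ separately, and takes the diagonal $H$-action on the honest Banach-space sum $\mathcal{F}(\tilde{X}_1)\oplus_1\cdots\oplus_1\mathcal{F}(\tilde{X}_N)$. Theorem~\ref{Thm_LFSQT} makes each summand isomorphic to $\ell^1$, hence so is the sum; transporting through the isomorphism gives a uniformly Lipschitz action of $H$ on $\ell^1$ with the required lower bound, and finally Lemma~\ref{Lem_fi_induc} induces it up to $G$. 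The two ingredients you are missing --- the finite-index reduction to obtain factor-wise actions, and the induction back to $G$ --- are exactly what allow one to bypass the free space of the product altogether.
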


We say that a finitely generated group has Property (QT) if it admits an isometric action on a product of quasi-trees such that the orbit maps are quasi-isometric embeddings; see \cite{BeBrFu}. This class is very rich, it contains residually finite hyperbolic groups \cite{BeBrFu}, mapping class groups \cite{BeBrFu}, and a significant subclass of $3$-manifold groups \cite{HaNgYa}; see also \cite{NguYan} for more examples. As a consequence of Theorem \ref{Thm_act_l1}, we obtain the following result, which partially answers Question 1.11 in \cite{DruMac}.

\begin{cor}\label{Cor_QT}
Let $G$ be a finitely generated group with Property (QT). Then $G$ has a proper uniformly Lipschitz affine action on $\ell^1$ with quasi-isometrically embedded orbits.
\end{cor}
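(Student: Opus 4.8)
The plan is to derive Corollary~\ref{Cor_QT} directly from Theorem~\ref{Thm_act_l1} by checking that the hypothesis of Property (QT) supplies exactly the input Theorem~\ref{Thm_act_l1} requires, and that the conclusion of the theorem can be upgraded to the two properties demanded in the corollary, namely properness and quasi-isometrically embedded orbits. By definition, a finitely generated group $G$ with Property (QT) admits an isometric action on a product of quasi-trees $X=X_1\times\cdots\times X_N$ (endowed with the $\ell^1$ sum metric \eqref{l1_metric}) whose orbit maps are quasi-isometric embeddings. Fixing a basepoint $o\in X$, this means there are constants $L\geq 1$ and $A\geq 0$ such that
\begin{align*}
\frac{1}{L}\,|s|_S - A \leq d_X(s\cdot o,o) \leq L\,|s|_S + A,\quad\forall s\in G,
\end{align*}
where $|\cdot|_S$ denotes the word length with respect to a finite generating set $S$. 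This is precisely the situation to which Theorem~\ref{Thm_act_l1} applies, so $G$ admits a uniformly Lipschitz affine action $\sigma$ on $\ell^1$ together with a constant $C>0$ satisfying $\|\sigma(s)0\|_1\geq \frac{1}{C}d_X(s\cdot o,o)-C$ for all $s\in G$.

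First I would record that the action $\sigma$ produced by Theorem~\ref{Thm_act_l1} is uniformly Lipschitz by construction, so only properness and the quasi-isometric embedding of orbits remain to be verified; both will follow from combining the lower bound in Theorem~\ref{Thm_act_l1} with the lower quasi-isometry bound from Property (QT). Chaining these two estimates yields
\begin{align*}
\|\sigma(s)0\|_1 \geq \frac{1}{C}d_X(s\cdot o,o)-C \geq \frac{1}{C}\left(\frac{1}{L}|s|_S-A\right)-C = \frac{1}{CL}|s|_S - \left(\frac{A}{C}+C\right),
\end{align*}
which is a lower bound for the displacement $\|\sigma(s)0\|_1$ linear in the word length $|s|_S$. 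For properness I would argue that since $|s|_S\to\infty$ as $s$ leaves every finite subset of $G$ (the word metric being proper on a finitely generated group), this lower bound forces $\|\sigma(s)0\|_1\to\infty$, so only finitely many $s$ can keep the orbit point $\sigma(s)0$ inside any bounded ball of $\ell^1$; this is exactly properness of the affine action.

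For the quasi-isometric embedding of orbits, I would complement the lower bound above with a matching upper bound. A uniformly Lipschitz affine action satisfies $\|\sigma(s)0\|_1 = \|\sigma(s)0 - \sigma(e)0\|_1 \leq K\,d_\sigma(s,e)$ for a suitable comparison, and more concretely, writing $s$ as a product of at most $|s|_S$ generators and using that each generator moves $0$ by a bounded amount together with the uniform Lipschitz constant $K$ of the linear parts, a telescoping estimate gives $\|\sigma(s)0\|_1 \leq K'|s|_S$ for some constant $K'$. Combining this upper bound with the linear lower bound shows that the orbit map $s\mapsto\sigma(s)0$ is a quasi-isometric embedding of $(G,|\cdot|_S)$ into $\ell^1$, which is the remaining assertion. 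The main point requiring care, and the step I expect to be the only genuine obstacle rather than a formality, is establishing this linear upper bound $\|\sigma(s)0\|_1\leq K'|s|_S$: it is not automatic from ``uniformly Lipschitz'' alone and must be extracted from the explicit structure of the affine action $\sigma$ built in Theorem~\ref{Thm_act_l1}, specifically from the fact that the cocycle $s\mapsto\sigma(s)0$ is built from the isometric action on $\mathcal{F}(X)$ via an orbit map whose displacement is controlled by $d_X(s\cdot o,o)$, and hence, via the upper quasi-isometry bound of Property (QT), by $|s|_S$.
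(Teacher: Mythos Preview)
Your approach is essentially the paper's: apply Theorem~\ref{Thm_act_l1}, chain its lower bound with the quasi-isometry lower bound coming from Property (QT) to obtain $\|\sigma(s)0\|_1\gtrsim |s|_S$, deduce properness, and combine with an upper bound to conclude that orbit maps are quasi-isometric embeddings.

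The only misstep is your final paragraph. The upper bound $\|\sigma(s)0\|_1\leq K'|s|_S$ \emph{is} automatic from the action being uniformly Lipschitz on a finitely generated group, and your own telescoping sketch already proves it: writing $s=s_1\cdots s_n$ with $s_i\in S$ and using the cocycle identity gives $b(s)=\sum_{i=1}^n\pi(s_1\cdots s_{i-1})b(s_i)$; since $\sup_{t\in G}\|\pi(t)\|\leq K$ (this is precisely the uniform boundedness hypothesis, applied to the group element $s_1\cdots s_{i-1}$, not a product of operator norms) and $M=\max_{s\in S}\|b(s)\|<\infty$, one gets $\|b(s)\|\leq KM|s|_S$. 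No recourse to the specific construction of $\sigma$ in Theorem~\ref{Thm_act_l1} is needed. The paper handles this step by simply citing \cite[Proposition~2.10]{GueKam} for the general fact that the upper inequality in \eqref{qi_embed} is always satisfied for cocycles of uniformly bounded representations of finitely generated groups.
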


As discussed above, for residually finite hyperbolic groups, this was already proved in \cite[Theorem 1.6]{DruMac}, and in \cite[Theorem E]{Gar} without the residual finiteness hypothesis. For mapping class groups, a slightly weaker statement was obtained in \cite[Theorem 1.8]{DruMac}. We point out that all the actions constructed in \cite{DruMac} have Lipschitz constant $2+\varepsilon$ for $\varepsilon>0$ arbitrary. Our methods do not yield such estimates.

Next we apply Theorem \ref{Thm_LFSQT} to acylindrically hyperbolic groups. In this case, Bralasubramanya's characterisation of acylindrical hyperbolicity \cite{Bal} allows one to recover the following result from \cite{DruMac}. Again, we do not obtain estimates on the Lipschitz constants as in \cite[Theorem 1.2]{DruMac}.

\begin{thm}[Dru\c{t}u--Mackay]\label{Thm_acyl}
Every acylindrically hyperbolic group admits a uniformly Lipschitz affine action on $\ell^1$ with unbounded orbits.
\end{thm}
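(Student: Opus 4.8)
The plan is to derive Theorem \ref{Thm_acyl} from the machinery already developed, by reducing acylindrical hyperbolicity to the setting of an isometric action on a single quasi-tree (or a product thereof) with an unbounded orbit. The key external input is Balasubramanya's characterisation \cite{Bal}: a group $G$ is acylindrically hyperbolic if and only if it admits a (cobounded) acylindrical action on a quasi-tree whose associated length function is unbounded. Concretely, I would invoke this to produce an isometric action of $G$ on a quasi-tree $Y$ such that the orbit map $s\mapsto s\cdot o$ is unbounded for some (equivalently every) basepoint $o\in Y$. At this stage the action need not be proper, and the orbit map need not be a quasi-isometric embedding, so we are in a genuinely weaker regime than Property (QT); only unboundedness of the orbit is guaranteed.

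With such an action in hand, I would apply Theorem \ref{Thm_act_l1} in the case $N=1$, taking $X=Y$. This directly yields a uniformly Lipschitz affine action $\sigma$ of $G$ on $\ell^1$ satisfying
\begin{align*}
\|\sigma(s)0\|_1 \geq \frac{1}{C}\, d_Y(s\cdot o,o) - C,\quad \forall s\in G,
\end{align*}
for some constant $C>0$. To conclude, I would argue that the orbits of $\sigma$ are unbounded: since the right-hand side involves $d_Y(s\cdot o,o)$, and the $G$-orbit of $o$ in $Y$ is unbounded by Balasubramanya's theorem, there exists a sequence $(s_n)$ in $G$ with $d_Y(s_n\cdot o,o)\to\infty$. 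The inequality above then forces $\|\sigma(s_n)0\|_1\to\infty$, so the orbit of $0\in\ell^1$ under $\sigma$ is unbounded, which is exactly the assertion of the theorem.

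I expect the main subtlety to lie not in the final affine-action step, which is essentially a direct application of Theorem \ref{Thm_act_l1}, but in correctly invoking Balasubramanya's characterisation and checking its hypotheses. In particular, one must ensure that the quasi-tree produced by \cite{Bal} is of the type to which our results apply (a countable simplicial quasi-tree, so that Theorem \ref{Thm_LFSQT} and hence Theorem \ref{Thm_act_l1} are available), and that finite generation of $G$ is either available or unnecessary for the construction. If the quasi-tree furnished by Balasubramanya is not \emph{a priori} simplicial or countable, a short reduction would be needed---for instance, passing to an appropriate $G$-invariant countable subspace containing the orbit of $o$, or replacing $Y$ by a quasi-isometric simplicial model---so that the hypotheses of Theorem \ref{Thm_act_l1} are met while preserving unboundedness of the orbit. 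Once this compatibility is secured, the remainder of the argument is formal.
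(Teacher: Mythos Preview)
Your proposal is correct and follows essentially the same route as the paper: invoke Balasubramanya's characterisation \cite{Bal} to obtain an isometric action of $G$ on a quasi-tree with unbounded orbits, then apply Theorem~\ref{Thm_act_l1} with $N=1$ to deduce the desired action on $\ell^1$. The reduction to a countable simplicial quasi-tree that you flag as a potential subtlety is already absorbed into the proof of Theorem~\ref{Thm_act_l1} via Proposition~\ref{Prop_Manning}, so no additional work is needed there.
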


As mentioned above, Corollary \ref{Cor_QT} can be applied to $3$-manifold groups. Let $M$ be a connected, compact, orientable $3$-manifold, and let $\pi_1(M)$ denote its fundamental group; we refer the reader to \cite[Part 3]{Mar} for details on $3$-manifolds. It was shown in \cite{HaNgYa} that $\pi_1(M)$ has Property (QT) if and only if no summand in its sphere-disk decomposition supports either $\operatorname{Sol}$ or $\operatorname{Nil}$ geometry. In that case, we can apply Corollary \ref{Cor_QT} to $\pi_1(M)$ directly and obtain actions on $\ell^1$ with quasi-isometrically embedded orbits. We are not able to conclude this in the presence of $\operatorname{Sol}$ or $\operatorname{Nil}$ geometry, but we still get properness.

\begin{thm}\label{Thm_3-man}
Let $M$ be a connected, compact, orientable 3-manifold. Then $\pi_1(M)$ has a proper uniformly Lipschitz affine action on $\ell^1$.
\end{thm}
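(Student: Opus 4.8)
The plan is to establish Theorem \ref{Thm_3-man} by reducing the general case to pieces that either satisfy Property (QT) or are among the finitely many exceptional geometries, and then treating those exceptions separately. First I would invoke the sphere-disk decomposition of $M$, which expresses $\pi_1(M)$ as built from the fundamental groups of the prime summands. By the result of Hanneke--Nguyen--Yang cited in the excerpt, if no summand supports $\operatorname{Sol}$ or $\operatorname{Nil}$ geometry, then $\pi_1(M)$ has Property (QT) and Corollary \ref{Cor_QT} applies directly, yielding even quasi-isometrically embedded orbits. So the real work is to handle the summands carrying $\operatorname{Sol}$ or $\operatorname{Nil}$ geometry, and to recombine the resulting actions into a single proper action for all of $\pi_1(M)$.

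For the exceptional pieces I would argue that groups modelled on $\operatorname{Sol}$ and $\operatorname{Nil}$ geometry are virtually polycyclic, hence amenable and in fact admit proper actions on Hilbert space or, more to the point, proper uniformly Lipschitz (indeed isometric) affine actions on $\ell^1$. Concretely, a finitely generated virtually nilpotent or virtually polycyclic group has a proper affine isometric action on $\ell^1$: one can build a proper, conditionally negative-type-like function, or more directly embed such a group quasi-isometrically via an injective homomorphism into a suitable solvable Lie group and use a known proper $1$-cocycle construction. The point is that for these low-complexity pieces properness on $\ell^1$ is classical and does not require Property (QT).

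The key combinatorial step is then to assemble a proper action on $\ell^1$ for the whole group $\pi_1(M)$ from proper uniformly Lipschitz actions on each factor of the decomposition. Here I would use the fact that $\pi_1(M)$ is (virtually) a graph of groups / amalgam over the decomposition, and that properness is detected by the word metric on $\pi_1(M)$. The mechanism is to take the $\ell^1$-direct sum $\bigoplus_k \ell^1$ of the pieces' representation spaces and define a combined affine action whose linear part is the sum of the uniformly Lipschitz linear parts and whose cocycle on each factor records the distance moved in that factor. Since distance in the Bass--Serre tree together with the distances inside the vertex groups control the ambient word metric on $\pi_1(M)$ up to a quasi-isometry, the resulting cocycle grows linearly in $d_{\pi_1(M)}(s\cdot o, o)$, giving properness; the $\ell^1$-sum of uniformly Lipschitz actions is again uniformly Lipschitz on $\ell^1\cong\bigoplus_k\ell^1$.

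The hard part will be the bookkeeping in the recombination step: one must ensure that the word metric on $\pi_1(M)$ is quasi-isometrically controlled by the displacements in the finitely many factors simultaneously, so that properness of each piece upgrades to genuine properness of the combined action, while only retaining uniform Lipschitzness rather than quasi-isometric embedding of orbits (which is exactly why the statement weakens to properness when $\operatorname{Sol}$ or $\operatorname{Nil}$ geometry is present). I expect the cleanest route is to cite the distortion estimates for the sphere-disk decomposition and the quasi-isometric structure of $3$-manifold groups, so that properness of the direct-sum action follows from the factorwise properness together with the finiteness of the decomposition, without attempting to control orbit growth from below by the full word metric.
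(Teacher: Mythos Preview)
Your overall architecture matches the paper: decompose $\pi_1(M)$ via the sphere--disk decomposition into a free product of pieces, handle the pieces with Property~(QT) via Corollary~\ref{Cor_QT}, handle the $\operatorname{Sol}$/$\operatorname{Nil}$ pieces via amenability (the paper cites Nowak's result that countable amenable groups act properly and isometrically on $\ell^1$), and then combine. The paper packages the decomposition slightly differently, citing the proof of \cite[Theorem 1.1]{HaNgYa} to write $G=\Gamma\ast\Lambda$ with $\Gamma$ having (QT) and $\Lambda$ a free product of virtually nilpotent and solvable groups, but this is the same picture.

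The genuine gap is your recombination step. Taking the finite $\ell^1$-direct sum $\bigoplus_k\ell^1$ of the factor spaces with ``the sum of the linear parts'' does \emph{not} produce a proper cocycle on a free product. Concretely, if $\Gamma$ acts on $E$ and $\Lambda$ on $F$ and you let $\Gamma$ act trivially on $F$ and vice versa, then for $s\in\Gamma$, $t\in\Lambda$ one computes $b(sts^{-1})=(0,b_\Lambda(t))$, independent of $s$; so arbitrarily long reduced words give bounded cocycle values and properness fails. Your appeal to the Bass--Serre tree in the \emph{justification} does not repair this, because your \emph{construction} never sees the tree: the missing ingredient is a coordinate indexed by the (infinite) vertex or edge set of the Bass--Serre tree $T$, not a finite sum over the pieces. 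The paper resolves this with an explicit construction (Lemma~\ref{Lem_free_prod}): one builds a representation on $\ell^p(T;E\oplus_p F\oplus_p\ell^p)$ and a cocycle whose norm on a reduced word $s_1\cdots s_n$ is $\sum_i\|b(s_i)\|^p+2n$, the extra $2n$ term coming from the regular representation of $\Gamma\times\Lambda$ and encoding the tree distance. A secondary point you would also need is the renorming trick (Corollary~\ref{Cor_free_prod}(b)): to feed uniformly Lipschitz actions into the free-product lemma, one first renorms each $\ell^1$ by $\|v\|_E=\sup_s\|\pi(s)v\|_1$ to make the actions isometric, applies the lemma, and then transports back along the isomorphism $E\approx\ell^1$.
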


This paper is organised as follows. Section \ref{S_prelim} consists of various preliminaries regarding quasi-trees and the different kinds of group actions that we consider in this paper. Section \ref{S_Lipfree} is devoted to Lipschitz free Banach spaces and a direct sum decomposition that will be crucial for our proofs. In Section \ref{S_R_trees}, we recall Kerr's construction of rough isometries into $\R$-trees in the particular case of simplicial quasi-trees. In Section \ref{S_Lip_qt}, we focus on the Lipschitz free space of a simplicial quasi-tree and the proof of Theorem \ref{Thm_LFSQT}. Next, in Section \ref{S_act_l1}, we apply these results to group actions and prove Theorem \ref{Thm_act_l1}, Corollary \ref{Cor_QT}, and Theorem \ref{Thm_acyl}. Finally, in Section \ref{S_3-man}, we focus on $3$-manifold groups and the proof of Theorem \ref{Thm_3-man}.

\subsection*{Acknowledgements}
I am grateful to Chris Gartland, Alice Kerr, and John Mackay for their very valuable comments and suggestions. I also thank Hoang Thanh Nguyen for kindly answering my questions about $3$-manifolds. A big part of this research was carried out at the \emph{Institute for Advanced Study in Mathematics of HIT}; I thank its members for their hospitality.

\section{{\bf Preliminaries}}\label{S_prelim}

\subsection{Quasi-trees}
Let $(X,d_X)$, $(Y,d_Y)$ be metric spaces. We say that $g:X\to Y$ is a quasi-isometric embedding if there is a constant $A\geq 1$ such that, for all $x_1,x_2\in X$,
\begin{align}\label{qi_embed}
\frac{1}{A}d_X(x_1,x_2)-A\leq d_Y(g(x_1),g(x_2))\leq A d_X(x_1,x_2)+A.
\end{align}
If, in addition, there is a constant $C>0$ such that every ball of radius $C$ in $Y$ intersects $g(X)$, then we say that $g$ is a quasi-isometry. In this case, the spaces $(X,d_X)$, $(Y,d_Y)$ are said to be quasi-isometric; see \cite[\S 8.1]{DruKap} for more details.

Let $X$ be a connected graph. We will view it as a metric space in two different ways. First, we can consider its set of vertices (or $0$-skeleton) $X^{(0)}$, endowed with the edge-path distance, which corresponds to the minimum of the lengths of paths joining each pair of vertices. This distance takes values in the natural numbers $\N$. The second approach is to view the graph as a ``continuous'' object by considering its $1$-skeleton $X^{(1)}$. In this case, we extend the edge-path distance to the edges by identifying each edge isometrically to the interval $[0,1]$. This new metric takes values in $\R_+$, and $X^{(0)}$ is isometrically embedded into $X^{(1)}$. Unless otherwise stated, we will always assume that $X^{(0)}$ and $X^{(1)}$ are endowed with these distances. Observe that these two spaces are quasi-isometric.

Recall that a tree is a connected graph without cycles. We say that a metric space is a quasi-tree if it is quasi-isometric to a tree. If a graph is a quasi-tree for its edge-path distance, we will say that it is a \emph{simplicial quasi-tree}. The following result says that, when dealing with actions of finitely generated groups, we can always restrict ourselves to simplicial quasi-trees; see \cite[Proposition 3.1]{Man} and \cite[Remark 3.2]{Man}.

\begin{prop}[Manning]\label{Prop_Manning}
Let $(X,d)$ be a quasi-tree, $o\in X$, and let $G$ be a finitely generated group acting by isometries on $X$. Then there is $r>0$ such that the set $S=\{s\in G\ \mid\ d(s\cdot o, o)\leq r\}$ generates $G$, and the Cayley graph $Y=\operatorname{Cay}(G,S)$ is a simplicial quasi-tree. Moreover, the map
\begin{align*}
s\quad\longmapsto\quad s\cdot o
\end{align*}
defines a quasi-isometric embedding of $Y$ into $X$.
\end{prop}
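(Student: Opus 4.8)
The plan is to treat the three assertions --- that $S$ generates $G$, that the orbit map is a quasi-isometric embedding, and that $Y$ is a simplicial quasi-tree --- in that order, with Manning's bottleneck criterion for quasi-trees as the main geometric tool. Generation is immediate: if $T$ is any finite generating set of $G$ and $r_0=\max_{t\in T}d(t\cdot o,o)$, then $T\subseteq S$ for every $r\ge r_0$, so $S$ generates; note also $S=S^{-1}$, since $d(s^{-1}\cdot o,o)=d(o,s\cdot o)$. For the orbit map $\phi\colon Y\to X$, $\phi(g)=g\cdot o$, the coarse Lipschitz upper bound $d(g\cdot o,h\cdot o)\le r\,d_Y(g,h)$ follows by writing $g^{-1}h$ as a geodesic word in $S$ and applying the triangle inequality one generator at a time, each moving $o$ by at most $r$. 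By equivariance, the only remaining point for the quasi-isometric embedding is the reverse inequality: every $g\in G$ admits a word in $S$ of length at most $A\,d(o,g\cdot o)+A$.

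This reverse inequality is the heart of the matter. Since $X$ is a quasi-tree, Manning's criterion provides a constant $\Delta$ so that, in a geodesic model of $X$, every path between two points passes within $\Delta$ of the midpoint of a geodesic joining them. Given $g$, I would fix a geodesic $[o,g\cdot o]$ with midpoint $m$ and feed in \emph{any} word for $g$ (one exists since $S$ generates): the associated orbit chain $o=p_0,\dots,p_k=g\cdot o$, whose consecutive jumps are at most $r$, must pass within $\Delta$ of $m$, so some vertex $p_i=g'\cdot o$ satisfies $d(g'\cdot o,m)\le\Delta+r$. Splitting $g=g'\cdot(g'^{-1}g)$ replaces $g$ by two elements whose displacements are at most $\tfrac12 d(o,g\cdot o)+(\Delta+r)$. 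Writing $F(n)=\sup\{\,|g|_S : d(o,g\cdot o)\le n\,\}$, this gives the recursion $F(n)\le 2F(n/2+c)$ with $c=\Delta+r$, whose fixed point is $2c$; unfolding it down to the scale $2c$ produces a binary tree with $O(n/c)$ leaves, each contributing a bounded word, and hence the linear bound $F(n)\le An+A$.

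Granting the quasi-isometric embedding, I would show $Y$ is a quasi-tree by verifying the bottleneck property for $Y$ and invoking Manning's criterion in the converse direction. Given vertices $g_1,g_2$, a $Y$-geodesic with midpoint $g_m$, and an arbitrary $Y$-path $\pi$ from $g_1$ to $g_2$, push everything forward by $\phi$: since $\phi$ is a quasi-isometric embedding, $\phi(g_m)$ is a coarse midpoint of $\phi(g_1),\phi(g_2)$, hence (as $X$ is hyperbolic) lies within bounded distance of the genuine midpoint $m_X$; the image of $\pi$ is a path through orbit points with jumps at most $r$, so by the bottleneck property of $X$ it passes within $\Delta$ of $m_X$, hence within some bounded distance $D$ of $\phi(g_m)$. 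Pulling this back through the lower bound of the quasi-isometric embedding shows $\pi$ passes within $AD+A$ of $g_m$ in $Y$, which is exactly the bottleneck property for the geodesic space $Y$.

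The step I expect to be the main obstacle is the base of the recursion in the second paragraph: controlling $F$ at the bottom scale $n\approx 2(\Delta+r)$, equivalently showing that elements moving $o$ a bounded amount have bounded $S$-length. Because the action is assumed neither proper nor cobounded, one cannot appeal to the \v{S}varc--Milnor lemma, and the orbit need not be locally finite --- indeed this locally infinite case is precisely the difficulty emphasised in the paper. Resolving it requires exploiting the quasi-tree structure genuinely at small scales, not merely asymptotically. Two further technical points need care but are routine: replacing $X$ by a geodesic space quasi-isometric to it, so that midpoints and the bottleneck criterion are available, and observing that infinite point-stabilisers are harmless, since every element fixing $o$ lies in $S$ and so the stabiliser has $Y$-diameter at most $2$.
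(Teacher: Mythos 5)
The first thing to note is that the paper does not prove this proposition at all: it is quoted directly from Manning (\cite[Proposition 3.1, Remark 3.2]{Man}), so your argument has to stand entirely on its own. It does not, and the failure is exactly at the step you yourself flag as ``the main obstacle'': that step is not a technical loose end, it is where all the content of the proposition lives. Your recursion $F(n)\le 2F(n/2+c)$, with $c=\Delta+r$, yields a linear bound only if $F$ is finite at the bottom scale $\approx 2c$, i.e.\ only if every element displacing $o$ by at most $2(\Delta+r)$ has bounded $S$-length. Since $2(\Delta+r)>r$ no matter how $r$ is chosen (this is true even when $X$ is an honest simplicial tree and $\Delta$ is as small as you like), this finiteness is not automatic from the definition of $S$: it is a scaled-down instance of precisely the statement being proved, and the midpoint-only bottleneck property gives no purchase on it. The action is not assumed proper, so there is no compactness or local finiteness to fall back on, and the recursion therefore never bottoms out. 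As written, the proof of the reverse inequality --- and hence of the quasi-isometric embedding and of the quasi-tree property of $Y$, both of which depend on it --- is incomplete.

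The missing idea is to use the stronger fact that in a quasi-tree \emph{every} point of a geodesic is a uniform bottleneck (immediate in a tree, and transported by quasi-isometry), and, crucially, to choose the radius of the generating set \emph{after} the bottleneck constant, rather than fixing $r\ge r_0$ at the outset as you do. Concretely: fix $r_0$ so that $S(r_0)$ generates; given $g$, any $S(r_0)$-word yields an orbit chain from $o$ to $g\cdot o$ with jumps at most $r_0$; mark points spaced $r_0$ apart along a geodesic $[o,g\cdot o]$; near each marked point the interpolated chain passes within $\Delta$, so some orbit point of the chain lies within $\Delta+r_0$ of it; consecutive selected orbit points are then within $3r_0+2\Delta$ of each other. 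Taking $r=3r_0+2\Delta$, consecutive selected points differ by elements of $S(r)$, which gives $|g|_{S(r)}\le d(o,g\cdot o)/r_0+O(1)$ directly, with no recursion and no base case to control. This choice of $r$ is why the proposition asserts ``there is $r>0$'' rather than ``for every $r$ making $S$ generate''. The same repair is needed in your third paragraph, which contains a genuine error: a quasi-isometric embedding distorts distances multiplicatively, so the image $\phi(g_m)$ of a $Y$-midpoint is a coarse midpoint only up to a multiplicative factor, and such a point can be linearly far from the true midpoint $m_X$ (already for quasi-isometric embeddings of $\mathbb{Z}$ into $\R$); hyperbolicity of $X$ does not rescue this. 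What is true, and suffices, is that the image of the $Y$-geodesic is a quasi-geodesic, so by the Morse lemma $\phi(g_m)$ lies within bounded distance of \emph{some} point $p$ of $[\phi(g_1),\phi(g_2)]$, and one then applies the every-point bottleneck property at $p$ rather than at $m_X$.
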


\begin{rmk}
The quasi-tree $Y$ given by Proposition \ref{Prop_Manning} will not be locally finite in general. In other words, the generating set $S$ need not be finite.
\end{rmk}

\subsection{Group actions on metric spaces}
Let $(X,d)$ be a metric space, and let $G$ be a group acting on it. We say that the action $G\curvearrowright X$ is uniformly Lipschitz if there is a constant $C\geq 1$ such that
\begin{align}\label{unif_Lip_act}
d(s\cdot x, s\cdot y) \leq C d(x,y),\quad\forall s\in G,\ \forall x,y\in X.
\end{align}
If this condition is satisfied with $C=1$, we say that the action is isometric.

Let $G\curvearrowright X$ be a uniformly Lipschitz action, and let $o\in X$. We say that the action is (metrically) proper if, for every $r>0$, the set
\begin{align*}
\{s\in G \ \mid\ d(s\cdot o, o)\leq r\}
\end{align*}
is finite. This condition does not depend on the choice of the point $o$.

In the case that the metric space is a Banach space $E$, we will be interested in actions by continuous affine transformations. More precisely, if $\operatorname{GL}(E)$ denotes the group of bounded, invertible operators on $E$, we look at actions of the form
\begin{align*}
\sigma(s)v=\pi(s)v+b(s),\quad\forall s\in G,\ \forall v\in E,
\end{align*}
where $\pi:G\to\operatorname{GL}(E)$ is a representation, i.e. a group homomorphism, and $b:G\to E$ is a map satisfying
\begin{align*}
b(st)=\pi(s)b(t)+b(s),\quad\forall s,t\in G.
\end{align*}
We say that $b$ is a cocycle for $\pi$. In this case, \eqref{unif_Lip_act} becomes
\begin{align*}
\sup_{s\in G}\|\pi(s)\|\leq C,
\end{align*}
and we say that $\pi$ is a uniformly bounded representation of $G$ on $E$.

\subsection{Induction of affine actions on $\ell^1$}
In order to prove Theorem \ref{Thm_act_l1}, we will need to induce actions from a finite-index subgroup to the ambient group. This is a standard procedure that we detail now in the particular case of actions on $\ell^1$.

Let $G$ be a group, and let $H$ be a finite-index subgroup of $G$. Let $\omega:G/H\to G$ be a section, i.e. a right inverse for the quotient map. We can always assume (and we will) that $\omega(H)$ is the identity element of $G$. Define $\alpha:G\times G/H\to H$ by
\begin{align}\label{cocy_fi_sg}
\alpha(s,x)=\omega(sx)^{-1}s\omega(x),\quad\forall s\in G,\ \forall x\in G/H.
\end{align}
Observe that, defining $\Omega=\omega(G/H)$, $G$ decomposes as
\begin{align}\label{G=bigsqcup}
G=\bigsqcup_{t\in H}\Omega t.
\end{align}
Then $\alpha(s,x)$ is the unique element of $H$ satisfying
\begin{align*}
s\omega(x)\in\Omega\alpha(s,x).
\end{align*}

Let $G$ be a group and let $e$ denote the identity element of $G$. We say that $|\cdot|:G\to[0,\infty)$ is a length function if $|e|=0$ and
\begin{align*}
|st|\leq |s| + |t|,\quad\forall s,t\in G.
\end{align*}
If $G$ acts by isometries on a metric space $(X,d)$, and we fix $o\in X$, then
\begin{align*}
|s|=d(s\cdot o,o)
\end{align*}
is a length function on $G$. In particular, if $S\subset G$ is a symmetric generating set, then $G$ acts on its Cayley graph $\operatorname{Cay}(G,S)$, and we obtain the associated word-length function
\begin{align*}
|s|_S=\min\{n\in\N\ \mid\ s\in S^n\},\quad\forall s\in G.
\end{align*}

The following is a very particular case of \cite[Proposition 3.4]{DruMac}, which gives an induction of actions from a lattice to the ambient locally compact group. Since we only deal with finite-index subgroups here, we include the proof of our particular case, which is much simpler.

\begin{lem}\label{Lem_fi_induc}
Let $G$ be a group, $H$ a finite-index subgroup of $G$, and $C\geq 1$. Let $|\cdot|$ be a length function on $G$, and assume that $H$ admits a $C$-Lipschitz affine action $\sigma$ on $\ell^1$ such that
\begin{align*}
\|\sigma(t)0\|_1\geq |t| - B,\quad\forall t\in H,
\end{align*}
for some constant $B>0$. Then there is a constant $\tilde{B}>0$ such that $G$ has a $C$-Lipschitz affine action $\tilde{\sigma}$ on $\ell^1$ satisfying
\begin{align*}
\|\tilde{\sigma}(s)0\|_1 \geq |s|-\tilde{B},\quad\forall s\in G.
\end{align*}
\end{lem}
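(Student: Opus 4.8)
The plan is to carry out the standard induction of affine actions, made explicit through the cocycle $\alpha$ from \eqref{cocy_fi_sg}. Since $\sigma$ is a $C$-Lipschitz affine action, write $\sigma(t)v=\pi(t)v+b(t)$ for $t\in H$, where $\pi:H\to\operatorname{GL}(\ell^1)$ is a representation with $\sup_{t\in H}\|\pi(t)\|\le C$ and $b:H\to\ell^1$ is a cocycle for $\pi$; evaluating at $v=0$ gives $\sigma(t)0=b(t)$, so the hypothesis reads $\|b(t)\|_1\ge|t|-B$. As $n:=[G:H]<\infty$, the $\ell^1$-sum $E:=\bigoplus_{x\in G/H}\ell^1$ is isometrically isomorphic to $\ell^1$, and I will construct the induced action on $E$.

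First I would define the induced representation $\tilde{\pi}:G\to\operatorname{GL}(E)$ and the induced cocycle $\tilde{b}:G\to E$ by
\[
(\tilde{\pi}(s)\xi)(x)=\pi\big(\alpha(s,s^{-1}x)\big)\,\xi(s^{-1}x),\qquad \tilde{b}(s)(x)=b\big(\alpha(s,s^{-1}x)\big),
\]
for $x\in G/H$. Checking that $\tilde{\pi}$ is a homomorphism and that $\tilde{b}$ satisfies the cocycle identity for $\tilde{\pi}$ both reduce, after a short computation, to the multiplicative identity $\alpha(s_1s_2,y)=\alpha(s_1,s_2y)\alpha(s_2,y)$, which is immediate from \eqref{cocy_fi_sg}, together (for $\tilde{b}$) with the cocycle identity for $b$. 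Setting $\tilde{\sigma}(s)\xi=\tilde{\pi}(s)\xi+\tilde{b}(s)$ then gives an affine action of $G$ on $E\cong\ell^1$.

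The step I would handle most carefully is the preservation of the Lipschitz constant, since this is where the $\ell^1$ structure is essential. The operator $\tilde{\pi}(s)$ permutes the coordinate blocks according to the bijection $x\mapsto s^{-1}x$ of $G/H$ and applies $\pi(\alpha(s,s^{-1}x))$, of norm at most $C$, on each block; hence the $\ell^1$-sum norm yields
\[
\|\tilde{\pi}(s)\xi\|=\sum_{x\in G/H}\big\|\pi(\alpha(s,s^{-1}x))\,\xi(s^{-1}x)\big\|_1\le C\sum_{x\in G/H}\|\xi(s^{-1}x)\|_1=C\|\xi\|.
\]
Thus $\sup_{s\in G}\|\tilde{\pi}(s)\|\le C$ and $\tilde{\sigma}$ is $C$-Lipschitz, with no deterioration of the constant. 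It is precisely the use of the $\ell^1$-sum, rather than an arbitrary combination of the block norms, that keeps the constant intact.

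For the lower bound I would isolate a single coordinate. Since $\tilde{\sigma}(s)0=\tilde{b}(s)$ and every summand below is nonnegative,
\[
\|\tilde{\sigma}(s)0\|=\sum_{x\in G/H}\big\|b(\alpha(s,s^{-1}x))\big\|_1\ge\big\|b(\alpha(s,H))\big\|_1,
\]
where I keep only the coordinate $x=sH$. Because $\omega(H)=e$, we have $\alpha(s,H)=\omega(sH)^{-1}s$, so the hypothesis on $b$ and the triangle inequality for the length function give $\|b(\alpha(s,H))\|_1\ge|\omega(sH)^{-1}s|-B\ge|s|-|\omega(sH)|-B$. Taking $\tilde{B}=B+\max_{x\in G/H}|\omega(x)|$, which is finite since $G/H$ is finite, yields $\|\tilde{\sigma}(s)0\|\ge|s|-\tilde{B}$ for all $s\in G$. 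I expect the only genuine bookkeeping to lie in fixing the conventions so that the element of $H$ appearing in this estimate is exactly $\alpha(s,H)$, rather than an inverse or conjugate of it.
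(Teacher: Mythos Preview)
Your proof is correct and follows essentially the same approach as the paper: the standard induction via the cocycle $\alpha$, the induced representation and cocycle on $\ell^1(G/H;\ell^1)$, the block-permutation argument for the Lipschitz constant, and the lower bound by isolating a single coordinate. The only cosmetic difference is that you keep the coordinate $x=sH$ (giving $\alpha(s,H)=\omega(sH)^{-1}s$) whereas the paper keeps $x=H$ (giving $\alpha(s,s^{-1}H)=s\,\omega(s^{-1}H)$); both choices yield the same estimate with $\tilde{B}=B+\max_{x\in G/H}|\omega(x)^{\pm 1}|$.
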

\begin{proof}
Recall that $\sigma$ is given by
\begin{align*}
\sigma(s)v=\pi(s)v+b(s),\quad\forall s\in G,\ \forall v\in \ell^1,
\end{align*}
where $\pi:H\to\operatorname{GL}(\ell^1)$ is a uniformly bounded representation with
\begin{align*}
\sup_{t\in H}\|\pi(t)\| \leq C,
\end{align*}
and $b:H\to\ell^1$ is a cocycle satisfying
\begin{align*}
\|b(t)\|_1\geq |t|-B,\quad\forall t\in H.
\end{align*}
Let $E=\ell^1(G/H; \ell^1)$, namely the space of functions $f:G/H\to\ell^1$, endowed with the norm
\begin{align*}
\|f\|_E=\sum_{x\in G/H}\|f(x)\|_1.
\end{align*}
This space is isometrically isomorphic to $\ell^1$. Let $\alpha:G\times G/H\to H$ be as in \eqref{cocy_fi_sg}, and define $\tilde{\pi}:G\to\operatorname{GL}(E)$ by
\begin{align*}
(\tilde{\pi}(s)f)(x)=\pi(\alpha(s,s^{-1}x))f(s^{-1}x),\quad\forall s\in G,\ \forall f\in E,\ \forall x\in G/H.
\end{align*}
Since $\alpha$ satisfies
\begin{align*}
\alpha(st,x)=\alpha(s,tx)\alpha(t,x),\quad\forall s,t\in G,\ \forall x\in G/H,
\end{align*}
the map $\tilde{\pi}$ defines a representation. Moreover, for every $f\in E$,
\begin{align*}
\|\tilde{\pi}(s)f\|_E &=\sum_{x\in G/H}\|\pi(\alpha(s,s^{-1}x))f(s^{-1}x)\|_1\\
&\leq C \sum_{x\in G/H}\|f(s^{-1}x)\|_1\\
&= C \|f\|_E,
\end{align*}
where the last identity holds because $s^{-1}$ permutes the elements of $G/H$. Now define $\tilde{b}:G\to E$ by
\begin{align*}
\tilde{b}(s)(x)=b(\alpha(s,s^{-1}x)),\quad\forall s\in G,\ \forall x\in G/H.
\end{align*}
For all $s,t\in G$, $x\in G/H$,
\begin{align*}
\tilde{b}(st)(x) &= b(\alpha(s,s^{-1}x)\alpha(t,t^{-1}s^{-1}x))\\
&= \pi(\alpha(s,s^{-1}x))b(\alpha(t,t^{-1}s^{-1}x)) + b(\alpha(s,s^{-1}x))\\
&= \left(\tilde{\pi}(s)\tilde{b}(t)\right)(x) + \tilde{b}(s)(x),
\end{align*}
so $\tilde{b}$ is indeed a cocycle for $\tilde{\pi}$. Moreover, for all $s\in G$,
\begin{align*}
\|\tilde{b}(s)\|_E &=\sum_{x\in G/H}\|b(\alpha(s,s^{-1}x))\|_1\\
&\geq \|b(\alpha(s,s^{-1}H))\|_1 \\
&\geq |\alpha(s,s^{-1}H)|-B.
\end{align*}
Observe that $\alpha(s,s^{-1}H)=s\sigma(s^{-1}H)$, and therefore
\begin{align*}
|\alpha(s,s^{-1}H)|\geq |s| - D,
\end{align*}
where
\begin{align*}
D=\max\left\{|s|\ \mid\ s\in\Omega^{-1}\right\}.
\end{align*}
Defining $\tilde{\sigma}(s)f=\tilde{\pi}(s)f+\tilde{b}(s)$ and $\tilde{B}=B+D$, we obtain the desired result.
\end{proof}

\section{{\bf Lipschitz free spaces}}\label{S_Lipfree}

We now turn to Lipschitz free spaces. We quickly review the basic definitions; for a more detailed treatment, we refer the reader to \cite{Gode}.

Let $(X,d,o)$ be a pointed metric space, meaning that we fix a point $o\in X$ which we view as the origin. The space $\Lip_o(X)$ is given by all Lipschitz functions $f:X\to\R$ that vanish at $o$, endowed with the norm
\begin{align*}
\Lip(f)=\inf\left\{L>0\ \mid\ \forall x,y\in X,\ |f(x)-f(y)|\leq L d(x,y)\right\}.
\end{align*}
Let $\Lip_o(X)^*$ denote the dual Banach space of $\Lip_o(X)$. There is an isometric embedding $\delta:X\to\Lip_o(X)^*$ given by
\begin{align}\label{def_delta_x}
\langle\delta_x,f\rangle=f(x),\quad\forall x\in X,\ \forall f\in\Lip_o(X).
\end{align}
The Lipschitz free space $\mathcal{F}(X)$ is defined as the closed linear span of $\delta(X)$ in $\Lip_o(X)^*$. This space satisfies
\begin{align*}
\mathcal{F}(X)^*=\Lip_o(X)
\end{align*}
for the duality pairing \eqref{def_delta_x}.

When the metric space is a graph, the Lipshitz norm admits a simpler characterisation.

\begin{lem}\label{Lem_Lip_graph}
Let $X$ be a connected graph, and let $(X^{(0)},d)$ be its $0$-skeleton, endowed with the edge-path distance. Let $o\in X^{(0)}$, and $f\in\Lip_o(X^{(0)})$. Then
\begin{align*}
\Lip(f)=\sup\left\{|f(x)-f(y)|\ \mid\ x,y\in X^{(0)},\ d(x,y)=1\right\}.
\end{align*}
\end{lem}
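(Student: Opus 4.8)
The plan is to denote by $M$ the supremum on the right-hand side, namely
\begin{align*}
M=\sup\left\{|f(x)-f(y)|\ \mid\ x,y\in X^{(0)},\ d(x,y)=1\right\},
\end{align*}
and to prove the two inequalities $M\leq\Lip(f)$ and $\Lip(f)\leq M$ separately. The first is immediate: whenever $d(x,y)=1$, the definition of the Lipschitz norm gives $|f(x)-f(y)|\leq\Lip(f)\,d(x,y)=\Lip(f)$, so taking the supremum over all such pairs yields $M\leq\Lip(f)$. In particular $M<\infty$, since $f$ is assumed Lipschitz.

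For the reverse inequality, the idea is to estimate $|f(x)-f(y)|$ for arbitrary $x,y\in X^{(0)}$ by decomposing a shortest edge-path along its edges. First I would fix $x,y$ and set $n=d(x,y)$. By the definition of the edge-path distance as the minimum length of a path joining $x$ and $y$, and since $X$ is connected, there exists a sequence of vertices $x=v_0,v_1,\ldots,v_n=y$ in which each consecutive pair $v_{i-1},v_i$ is joined by an edge, hence $d(v_{i-1},v_i)=1$. Then a telescoping argument together with the triangle inequality gives
\begin{align*}
|f(x)-f(y)|=\left|\sum_{i=1}^{n}\bigl(f(v_{i-1})-f(v_i)\bigr)\right|\leq\sum_{i=1}^{n}|f(v_{i-1})-f(v_i)|\leq nM=M\,d(x,y).
\end{align*}
Since $x,y$ were arbitrary, this shows that $M$ is a valid Lipschitz constant for $f$, whence $\Lip(f)\leq M$. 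Combining the two inequalities gives $\Lip(f)=M$, as claimed.

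The only point requiring care, which I regard as the main (albeit minor) obstacle, is the justification that a distance-realising edge-path consists of steps between adjacent vertices, i.e. that consecutive vertices $v_{i-1},v_i$ satisfy $d(v_{i-1},v_i)=1$. This is exactly what the edge-path distance encodes: a path of minimal length $n$ is by definition a concatenation of $n$ edges, so its vertices are pairwise adjacent along the path. Connectedness of $X$ guarantees that at least one such finite path exists, so $n$ is well defined and finite for every pair $x,y$. No other subtlety arises, and the telescoping estimate is then routine.
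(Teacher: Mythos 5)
Your proof is correct and follows essentially the same approach as the paper: the inequality $M\leq\Lip(f)$ is noted as immediate, and the reverse inequality is obtained by decomposing a geodesic edge-path between two arbitrary vertices and applying a telescoping triangle-inequality estimate, exactly as in the paper's argument.
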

\begin{proof}
Let $M=\sup\left\{|f(x)-f(y)|\ \mid\ d(x,y)=1\right\}$. By definition, $\Lip(f)\geq M$. Now let $x,y\in X^{(0)}$ with $d(x,y)=n$, and take a geodesic path $u_0,u_1,\ldots,u_n$ between $x$ and $y$, meaning that $u_0=x$, $u_n=y$, and $d(u_{i-1},u_i)=1$ for all $i\in\{1,\ldots,n\}$. Then
\begin{align*}
|f(x)-f(y)|\leq\sum_{i=1}^n|f(u_{i-1})-f(u_i)|\leq M n = M d(x,y),
\end{align*}
which shows that $\Lip(f)\leq M$.
\end{proof}

Given two Banach spaces $E,F$, we will use the notation $E\approx F$ to say that there is a bounded, bijective, linear map $\Phi:E\to F$. In this case, we say that $E$ and $F$ are isomorphic. If $\Phi$ can be chosen to be an isometry, then we say that $E$ and $F$ are isometrically isomorphic, and we write $E\cong F$.

Let $E_0$ be a closed subspace of $E$. We say that $E_0$ is complemented in $E$ if there is a bounded linear map $P:E\to E$ such that $P^2=P$ and $\operatorname{Ran}(P)=E_0$. In this case, we say that $P$ is a projection onto $E_0$. This is equivalent to the fact that $E_0$ admits a closed complement. More precisely,
\begin{align*}
E= E_0\oplus\Ker(P),
\end{align*}
where $\oplus$ denotes the algebraic direct sum of vector spaces; see \cite[\S 4.1]{FHHMZ} for details.

Given $p\in(1,\infty)$, we will use the notation $E\oplus_p F$ to denote the space $E\oplus F$, endowed with the norm
\begin{align*}
\|(u,v)\|=\left(\|u\|_E^p+\|v\|_F^p\right)^{1/p},\quad \forall u\in E,\ \forall v\in F.
\end{align*}
Similarly, $E\oplus_\infty F$ denotes the same space, endowed with the norm
\begin{align*}
\|(u,v)\|=\max\left\{\|u\|_E,\|v\|_F\right\},\quad \forall u\in E,\ \forall v\in F.
\end{align*}
The following result is probably well known to experts. Since it is essential to our purposes, we include its proof for completeness.

\begin{lem}\label{Lem_phi_g}
Let $(X,d_X)$, $(Y,d_Y)$ be metric spaces, and let $g:X\to Y$ be a Lipschitz map. Define $\phi_g:\mathcal{F}(X)\to\mathcal{F}(Y)$ by
\begin{align}\label{phi_g}
\phi_g(\delta_x)=\delta_{g(x)},\quad\forall x\in X.
\end{align}
Then $\phi_g$ is a well-defined, bounded linear map of norm at most $\Lip(g)$. Moreover, if $g$ is surjective and has a Lipschitz right inverse $h:Y\to X$, then $\mathcal{F}(Y)$ is isomorphic to a complemented subspace of $\mathcal{F}(X)$. More precisely,
\begin{align}\label{F(Y)_complm}
\mathcal{F}(X)\approx\mathcal{F}(Y)\oplus_1\Ker(\phi_g).
\end{align}
\end{lem}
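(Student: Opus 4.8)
The plan is to prove the two assertions in turn. For the first part, I would show that $\phi_g$ is well defined and bounded by exhibiting it as the predual (restriction) map of the composition operator on Lipschitz functions. Concretely, for $f\in\Lip_o(Y)$ (with basepoint $g(o)$, say), the pullback $f\circ g$ lies in $\Lip_o(X)$ with $\Lip(f\circ g)\leq\Lip(g)\,\Lip(f)$; this defines a bounded operator $C_g:\Lip_o(Y)\to\Lip_o(X)$ of norm at most $\Lip(g)$. Its adjoint $C_g^*:\Lip_o(X)^*\to\Lip_o(Y)^*$ sends $\delta_x$ to $\delta_{g(x)}$, since $\langle C_g^*\delta_x,f\rangle=\langle\delta_x,f\circ g\rangle=f(g(x))=\langle\delta_{g(x)},f\rangle$. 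Restricting $C_g^*$ to $\mathcal{F}(X)$ gives a map taking the dense subspace spanned by $\{\delta_x\}$ into $\mathcal{F}(Y)$, hence landing in $\mathcal{F}(Y)$, and agreeing with the formula \eqref{phi_g}. Boundedness by $\Lip(g)$ is inherited from $\|C_g\|\leq\Lip(g)$.

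For the second part, I would use the Lipschitz right inverse $h$ to build a retraction. Since $g\circ h=\mathrm{id}_Y$, the associated free-space maps satisfy $\phi_g\circ\phi_h=\phi_{g\circ h}=\phi_{\mathrm{id}_Y}=\mathrm{id}_{\mathcal{F}(Y)}$, where $\phi_h:\mathcal{F}(Y)\to\mathcal{F}(X)$ is the bounded map from the first part. Thus $\phi_g$ is a surjection admitting the bounded right inverse $\phi_h$, so $\phi_h$ is an isomorphism onto its image and that image is a complemented subspace of $\mathcal{F}(X)$: the operator
\begin{align*}
P=\phi_h\circ\phi_g:\mathcal{F}(X)\to\mathcal{F}(X)
\end{align*}
is a bounded idempotent, since $P^2=\phi_h\phi_g\phi_h\phi_g=\phi_h(\mathrm{id})\phi_g=\phi_h\phi_g=P$. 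Its range is $\operatorname{Ran}(\phi_h)\cong\mathcal{F}(Y)$, and its kernel coincides with $\Ker(\phi_g)$ (one inclusion is clear from $Px=0\iff\phi_g x=0$ using injectivity of $\phi_h$).

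To obtain the precise splitting \eqref{F(Y)_complm}, I would invoke the standard fact recorded earlier in the excerpt that a bounded idempotent $P$ yields an algebraic direct sum $\mathcal{F}(X)=\operatorname{Ran}(P)\oplus\Ker(P)$ with both summands closed. Combining $\operatorname{Ran}(P)=\operatorname{Ran}(\phi_h)\approx\mathcal{F}(Y)$ (via the isomorphism $\phi_h$, whose inverse on the range is $\phi_g$) with $\Ker(P)=\Ker(\phi_g)$ gives the asserted isomorphism $\mathcal{F}(X)\approx\mathcal{F}(Y)\oplus_1\Ker(\phi_g)$; the choice of the $\oplus_1$ norm is harmless here since any two norms on a direct sum of the form $\|u\|+\|v\|$, $\max\{\|u\|,\|v\|\}$, etc., are equivalent, and isomorphism only requires a bounded bijective linear map. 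The only genuinely delicate point is the density argument in the first part: one must check that $C_g^*$ carries $\mathcal{F}(X)$ into $\mathcal{F}(Y)$ rather than merely into the larger bidual $\Lip_o(Y)^*$. This follows because $C_g^*$ is weak$^*$--weak$^*$ continuous and maps the dense spanning set $\{\delta_x\}$ into $\mathcal{F}(Y)$, so by continuity and linearity it maps the closed linear span $\mathcal{F}(X)$ into $\overline{\operatorname{span}}\,\delta(Y)=\mathcal{F}(Y)$; this is the step I would write out most carefully.
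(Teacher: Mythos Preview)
Your proof is essentially the same as the paper's: both parts hinge on the pairing identity $\langle f,\phi_g(u)\rangle=\langle f\circ g,u\rangle$ to get boundedness, and then on the idempotent $P=\phi_h\phi_g$ to produce the splitting with $\Ker(P)=\Ker(\phi_g)$ and $\operatorname{Ran}(P)\approx\mathcal{F}(Y)$. One small correction: in your ``delicate point'' you invoke weak$^*$--weak$^*$ continuity of $C_g^*$ to conclude $C_g^*(\mathcal{F}(X))\subset\mathcal{F}(Y)$, but $\mathcal{F}(Y)$ is only \emph{norm}-closed in $\Lip_o(Y)^*$, not weak$^*$-closed, so that topology is the wrong one; simply use that $C_g^*$ is norm--norm continuous (it is the adjoint of a bounded operator) and that it maps the norm-dense span of $\{\delta_x\}$ into $\mathcal{F}(Y)$ --- this is exactly what the paper does, bounding $\phi_g$ on finite combinations and extending by density.
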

\begin{proof}
For every finitely supported $u:X\to\R$, and every $f\in\Lip_{g(o)}(Y)$,
\begin{align}\label{f,phi_g(u)}
\langle f,\phi_g(u)\rangle = \sum_{x\in X}u(x)\langle f,\delta_{g(x)}\rangle = \langle f\circ g,u\rangle.
\end{align}
Hence
\begin{align*}
|\langle f,\phi_g(u)\rangle| \leq \Lip(f)\Lip(g)\|u\|_{\mathcal{F}(X)}.
\end{align*}
This shows that $\phi_g$ extends to a bounded linear map of norm at most $\Lip(g)$. Assume now that $g$ is surjective with a Lipschitz right inverse $h:Y\to X$. By the argument above, there is a bounded linear map $\phi_h:\mathcal{F}(Y)\to\mathcal{F}(X)$ satisfying $\phi_g\phi_h=\mathrm{id}_{\mathcal{F}(Y)}$. In particular, $\phi_h$ is injective. Then $\phi_h\phi_g:\mathcal{F}(X)\to\mathcal{F}(X)$ is a projection onto a subspace of $\mathcal{F}(X)$ which is isomorphic to $\mathcal{F}(Y)$. Moreover,
\begin{align*}
\Ker(\phi_h\phi_g)=\Ker(\phi_g),
\end{align*}
which gives the decomposition \eqref{F(Y)_complm}.
\end{proof}

As a consequence of Lemma \ref{Lem_phi_g}, we obtain a similar conclusion for the dual map $\phi_g^*:\Lip_{g(o)}(Y)\to \Lip_o(X)$.

\begin{cor}\label{Cor_phi_g*}
Let $(X,d_X)$, $(Y,d_Y)$ be metric spaces, and let $g:X\to Y$ be a Lipschitz map. Fix $o\in X$, and define $\phi_g^*:\Lip_{g(o)}(Y)\to \Lip_o(X)$ by
\begin{align*}
\phi_g^*(f)=f\circ g,\quad\forall f\in\Lip_{g(o)}(Y).
\end{align*}
Then $\phi_g^*$ is the adjoint map of $\phi_g:\mathcal{F}(X)\to\mathcal{F}(Y)$, given by Lemma \ref{Lem_phi_g}. Moreover, if $g$ is surjective and has a Lipschitz right inverse $h:Y\to X$ with $h(g(o))=o$, then $\Lip_{g(o)}(Y)$ is isomorphic to a complemented subspace of $\Lip_o(X)$. More precisely,
\begin{align}\label{Lip(Y)_complm}
\Lip_o(X)\approx\Lip_{g(o)}(Y)\oplus_\infty\Ker(\phi_h^*).
\end{align}
\end{cor}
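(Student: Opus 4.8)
The plan is to deduce everything from Lemma \ref{Lem_phi_g} by dualising, since the statement is essentially the adjoint version of the direct-sum decomposition already established. First I would verify the identification of $\phi_g^*$ with the composition map $f\mapsto f\circ g$. This is immediate from the pairing \eqref{f,phi_g(u)}: for $u\in\mathcal{F}(X)$ and $f\in\Lip_{g(o)}(Y)$ we have $\langle f,\phi_g(u)\rangle=\langle f\circ g,u\rangle$, which says exactly that the adjoint of $\phi_g$ sends $f$ to $f\circ g$. One should note that $f\circ g$ vanishes at $o$ (since $f$ vanishes at $g(o)$), so the composition map does land in $\Lip_o(X)$, and its norm is at most $\Lip(g)$ by the same estimate, consistent with $\|\phi_g^*\|=\|\phi_g\|$.

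Next I would treat the complementation statement. The key observation is that taking adjoints reverses arrows and converts $\oplus_1$ decompositions of the preduals into $\oplus_\infty$ decompositions of the duals. Concretely, from Lemma \ref{Lem_phi_g} we have the projection $\phi_h\phi_g$ on $\mathcal{F}(X)$ whose range is isomorphic to $\mathcal{F}(Y)$; dualising, $(\phi_h\phi_g)^*=\phi_g^*\phi_h^*$ is a projection on $\Lip_o(X)=\mathcal{F}(X)^*$. I would check that $\phi_h^*\phi_g^*=\mathrm{id}$ on $\Lip_{g(o)}(Y)$, which is where the hypothesis $h(g(o))=o$ enters: it guarantees that $\phi_h^*(f)=f\circ h$ maps $\Lip_o(X)$ into $\Lip_{g(o)}(Y)$ (so that $f\circ h$ vanishes at $g(o)$), and then $\phi_h^*\phi_g^*(f)=f\circ g\circ h=f$ because $g\circ h=\mathrm{id}_Y$. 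Hence $\phi_g^*$ is injective with a bounded left inverse, so its range is a complemented subspace isomorphic to $\Lip_{g(o)}(Y)$.

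For the precise $\oplus_\infty$ form \eqref{Lip(Y)_complm}, I would identify the complement as $\Ker(\phi_h^*)$ and then explain why the norm is the max rather than the sum. The projection $P=\phi_g^*\phi_h^*$ has range isomorphic to $\Lip_{g(o)}(Y)$ and kernel equal to $\Ker(\phi_h^*)$, giving the algebraic splitting $\Lip_o(X)=\operatorname{Ran}(P)\oplus\Ker(\phi_h^*)$. The passage from $\oplus_1$ to $\oplus_\infty$ is the general principle that if a Banach space $E$ splits isometrically (or isomorphically) as $E_0\oplus_1 E_1$, then the dual splits as $E_0^*\oplus_\infty E_1^*$, because the unit ball of an $\ell^1$-sum is the convex hull of the two factor balls and the supporting functionals decouple under the max norm. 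I would apply this to $E=\mathcal{F}(X)$, $E_0=\mathcal{F}(Y)$, $E_1=\Ker(\phi_g)$, and then match the dual factors: the annihilator of $\Ker(\phi_g)$ is $\operatorname{Ran}(\phi_g^*)\cong\Lip_{g(o)}(Y)$, while the annihilator of $\mathcal{F}(Y)$ (embedded via $\phi_h$) corresponds to $\Ker(\phi_h^*)$.

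The step I expect to require the most care is the last one, namely making the $\oplus_\infty$ norm statement rigorous at the level of an isomorphism (\ref{F(Y)_complm}) being only an isomorphism, not an isometry. When the predual decomposition is merely $\approx$ rather than $\cong$, the dual decomposition is likewise only up to isomorphism, so one cannot literally claim an isometric $\oplus_\infty$-splitting; the cleanest route is to invoke the general duality $(E_0\oplus_1 E_1)^*\cong E_0^*\oplus_\infty E_1^*$ together with the fact that applying a linear isomorphism transports the decomposition, and to phrase \eqref{Lip(Y)_complm} accordingly as an $\approx$-statement. Everything else---the identification of $\phi_g^*$, the role of $h(g(o))=o$, and the computation $\phi_h^*\phi_g^*=\mathrm{id}$---is routine bookkeeping with composition operators.
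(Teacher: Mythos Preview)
Your proposal is correct and follows essentially the same route as the paper: both identify $\phi_g^*$ via the pairing \eqref{f,phi_g(u)}, then obtain the projection $\phi_g^*\phi_h^*$ on $\Lip_o(X)$ by dualising the projection $\phi_h\phi_g$ from Lemma~\ref{Lem_phi_g}, and finally observe $\Ker(\phi_g^*\phi_h^*)=\Ker(\phi_h^*)$ (using the injectivity of $\phi_g^*$ that comes from $\phi_h^*\phi_g^*=\mathrm{id}$). Your additional discussion of the $\oplus_1\leftrightarrow\oplus_\infty$ duality is correct but more than the paper spells out; since \eqref{Lip(Y)_complm} is only claimed up to $\approx$, the choice of $\oplus_\infty$ is cosmetic and the paper's proof does not elaborate on it.
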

\begin{proof}
The fact that $\phi_g^*$ is the adjoint map of $\phi_g$ follows from \eqref{f,phi_g(u)}. Then $\phi_g^*\phi_h^*:\Lip_o(X)\to\Lip_o(X)$ is a projection onto a subspace of $\Lip_o(X)$, which is isomorphic to $\Lip_{g(o)}(Y)$. Moreover,
\begin{align*}
\Ker(\phi_g^*\phi_h^*)=\Ker(\phi_h^*).
\end{align*}
\end{proof}

\begin{rmk}\label{Rmk_Ker_phi*}
In the decompositions \eqref{F(Y)_complm} and \eqref{Lip(Y)_complm}, there is a natural identification
\begin{align*}
\Ker(\phi_g)^*\approx\Ker(\phi_h^*).
\end{align*}
Indeed, in general, for a projection $P$ on a Banach space $E$, we have
\begin{align*}
\Ker(P)^*\approx\left(E/\operatorname{Ran}(P)\right)^*\cong\operatorname{Ran}(P)^\perp=\Ker(P^*);
\end{align*}
see e.g. \cite[Proposition 4.4]{FHHMZ}. Then apply this to $P=\phi_h\phi_g$.
\end{rmk}

\section{{\bf Kerr's construction of $\R$-trees}}\label{S_R_trees}

Let $X$ be a metric space. An arc in $X$ is the image of a topological embedding $[a,b]\to X$, where $[a,b]$ is a closed interval in $\R$. If this embedding can be chosen to be an isometry, then the arc is called a geodesic path.

An $\R$-tree is a geodesic metric space such that, for each pair of points, there is a unique arc joining them, and this arc is a geodesic path; see \cite{Bes} for details. We say that a point $x$ in an $\R$-tree $X$ is a branching point of $X$ if the subset $X\setminus\{x\}$ has at least 3 path-connected components. Observe that the $1$-skeleton of a tree is an $\R$-tree, and its branching points are exactly the vertices of degree greater than $2$.

In \cite{Ker}, Kerr showed that quasi-trees are roughly isometric to $\R$-trees. We review her construction here in the particular case of simplicial quasi-trees. We will see that the branching points of the associated $\R$-tree are contained in the image of the vertices of the quasi-tree. This fact will be crucial for the proof of Theorem \ref{Thm_LFSQT}.

Let $X$ be a simplicial quasi-tree, and let $X^{(0)}$, $X^{(1)}$ be its $0$-skeleton and $1$-skeleton respectively. We fix a point $o\in X^{(0)}$, and define, for all $x_1,x_2\in X^{(1)}$,
\begin{align*}
R_o(x_1,x_2)=\sup\{r\geq 0\ \mid\ x_1,x_2 \text{ lie in the same path component of } X^{(1)}\setminus B(o,r)\}.
\end{align*}
Here $B(o,r)$ denotes the open ball of radius $r$, centred at $o$. Observe that $R_o(x_1,x_2)$ is a natural number whenever $x_1$ and $x_2$ belong to $X^{(0)}$. We define a pseudo-metric on $X^{(1)}$ by
\begin{align}\label{def_d'}
d'(x_1,x_2)=d(x_1,o)+d(x_2,o)-2R_o(x_1,x_2),\quad\forall x_1,x_2\in X^{(1)}.
\end{align}
This is not Kerr's original definition (see \cite[Definition 3.1]{Ker}), but \cite[Lemma 6.3]{Ker} gives the equivalence with the formula above for geodesic metric spaces. Since $R_o$ is symmetric, so is $d'$. The fact that it satisfies the triangle inequality was proved in \cite[Lemma 3.4]{Ker}. We define $(Y,d_Y)$ as the quotient space
\begin{align}\label{Y=quot}
(Y,d_Y)=(X^{(1)},d')/\sim,
\end{align}
where
\begin{align*}
x_1\sim x_2 \iff d'(x_1,x_2)=0.
\end{align*}
The following was essentially proved in \cite[Proposition 3.8]{Ker} and \cite[Proposition 4.2]{Ker}.

\begin{lem}\label{Lem_Kerr+branch}
Let $X$ be a simplicial quasi-tree, and let $d_X$ denote the edge-path distance on $X^{(1)}$. Let $o\in X^{(0)}$ and let $(Y,d_Y)$ be defined as in \eqref{Y=quot}. Then $(Y,d_Y)$ is an $\R$-tree. Moreover, if $g:X^{(1)}\to Y$ denotes the quotient map, then there is a constant $\Delta>0$ such that
\begin{align}\label{ineq_d_T}
d_X(x_1,x_2)-\Delta\leq d_Y(g(x_1),g(x_2))\leq d_X(x_1,x_2),\quad\forall x_1,x_2\in X^{(1)},
\end{align}
and $g(X^{(0)})$ contains the set of branching points of $Y$.
\end{lem}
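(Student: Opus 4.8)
The plan is to derive the first two assertions from Kerr's work and to concentrate on the branching-point statement, which is the point we shall actually need. That $(Y,d_Y)$ is an $\R$-tree is \cite[Proposition 3.8]{Ker}, applied to the pseudo-metric $d'$ of \eqref{def_d'} (legitimate by the equivalence \cite[Lemma 6.3]{Ker}); its core is the ultrametric-type inequality $R_o(x_1,x_2)\geq\min\{R_o(x_1,x_3),R_o(x_2,x_3)\}$, which is immediate from the definition of $R_o$ because ``lying in the same path component of $X^{(1)}\setminus B(o,r)$'' is a transitive relation. The sandwich inequality \eqref{ineq_d_T} is \cite[Proposition 4.2]{Ker}: the content is that $R_o(x_1,x_2)$ is squeezed between the Gromov product $(x_1\mid x_2)_o=\tfrac12(d_X(x_1,o)+d_X(x_2,o)-d_X(x_1,x_2))$ and $(x_1\mid x_2)_o+\tfrac{\Delta}{2}$, with $\Delta$ depending only on the quasi-tree constants of $X$, so that $d'$ and $d_X$ differ by at most $\Delta$.

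For the branching statement I would first record the rooted-tree picture. Since $o\in B(o,r)$ for every $r>0$ while $X^{(1)}$ is connected, one has $R_o(x,o)=0$, hence $d_Y(g(o),g(x))=d_X(x,o)$; thus $g(o)$ is a root of $Y$ and $\lambda(\cdot)=d_X(\cdot,o)$ is the associated level function. Comparing \eqref{def_d'} with the tree identity $d_Y(p,q)=\lambda(p)+\lambda(q)-2\lambda(p\wedge q)$, where $p\wedge q$ is the confluent of $p,q$ relative to the root, shows that the confluent of $g(x_1)$ and $g(x_2)$ lies at level exactly $R_o(x_1,x_2)$. Consequently a point $y=g(x)$ of level $\ell=d_X(x,o)$ with $y\neq g(o)$ is a branching point if and only if there exist $x_1,x_2\in X^{(1)}$ with $d_X(x_i,o)>\ell$ and $R_o(x,x_1)=R_o(x,x_2)=R_o(x_1,x_2)=\ell$; that is, $y$ carries at least two ``upward'' directions in addition to the one pointing towards the root. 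Interior edge points cannot map to $g(o)$, since $d_Y(g(o),g(x))=d_X(x,o)>0$ for $x\neq o$, so it suffices to rule out branching at such $y$.

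The crux is then a local analysis at an interior edge point, and this is where I expect the main difficulty to lie, since it requires translating the purely component-theoretic quantity $R_o$ into the local geometry of a single edge. Suppose $y=g(x)$ is a branching point with $x$ in the interior of an edge $[v,w]$, and fix $x_1,x_2$ as above. The only neighbours of $x$ in $X^{(1)}$ lie along $[v,w]$, so $x$ has exactly two local directions, and along the edge $t\mapsto d_X(x_t,o)$ is a tent function. If $x$ is a strict local maximum of this function (which forces $d_X(v,o)=d_X(w,o)$ and $x$ the midpoint), then for $r$ slightly below $\ell$ the component of $x$ in $X^{(1)}\setminus B(o,r)$ is a bounded sub-arc of the edge whose levels are all at most $\ell$ and which reaches neither $v$ nor $w$, so it cannot contain the point $x_i$ of level $>\ell$, contradicting $R_o(x,x_i)=\ell$. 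Otherwise $t\mapsto d_X(x_t,o)$ is monotone near $x$, so exactly one of the two directions increases the distance to $o$; both points $x_1,x_2$ lying above level $\ell$ are then reached from $x$ along this single upward direction, so they sit in a common branch at $x$ and their confluent lies strictly above level $\ell$, contradicting $R_o(x_1,x_2)=\ell$. In either case $x$ must be a vertex, and therefore every branching point of $Y$ belongs to $g(X^{(0)})$. The delicate step to make fully rigorous is this last one, namely that a unique upward local direction at $x$ genuinely forces the confluent of two higher points to exceed level $\ell$; I would prove it by exhibiting, for some $r'>\ell$, an explicit path in $X^{(1)}\setminus B(o,r')$ joining $x_1$ to $x_2$ through the $w$-ward arc.
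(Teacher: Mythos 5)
Your overall architecture is sound and runs parallel to the paper's proof: both derive the $\R$-tree structure and the inequality \eqref{ineq_d_T} from Kerr's results, both exploit the level function $\lambda=d_X(\cdot,o)$ via $R_o(x,o)=0$, both translate branching into $R_o$-data, and both must rule out branching at $g(x)$ for $x$ interior to an edge. Your identification of the confluent level with $R_o$, your branching criterion, and your Case 1 (tent midpoint) are correct and complete. The genuine gap is Case 2, which is where the entire content of the lemma sits, and your argument there is circular as written. Keep your notation: $x$ interior to the edge $[v,w]$, level $\ell=d_X(x,o)=k+t$ with $k=d_X(v,o)$ and the level increasing toward $w$, and $x_1,x_2$ of level $>\ell$ with $R_o(x,x_1)=R_o(x,x_2)=R_o(x_1,x_2)=\ell$. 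You claim that since $x$ has a unique upward direction, $x_1$ and $x_2$ ``sit in a common branch at $x$''. But branches live in $Y$, not in $X^{(1)}$: whether $g(x_1)$ and $g(x_2)$ lie in one component of $Y\setminus\{g(x)\}$ is exactly the assertion $R_o(x_1,x_2)>\ell$ that you are trying to prove. What the unique upward direction actually gives is that every path from $x$ to $x_i$ in $X^{(1)}\setminus B(o,r)$, for $k<r<\ell$, must exit the edge through $w$; concatenating two such paths at $w$ only re-proves $R_o(x_1,x_2)\geq\ell$, which you already know. No purely local statement at $x$ can decide branch membership in $Y$, because the quotient $g$ typically identifies $x$ with infinitely many other points of level $\ell$ (this is exactly the locally infinite situation the paper insists on handling).

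The missing step --- upgrading ``joined above level $r$ for every $r<\ell$'' to ``joined above level $r'$ for some $r'>\ell$'' --- is precisely where the paper does its work, and it is not automatic: it uses the graph structure, namely that vertices have integer levels while $\ell\notin\N$. Concretely, one completes your sketch as follows. Take the portion from $w$ to $x_i$ of a path avoiding $B(o,r)$ with $k<r<\ell$, and replace it by a non-backtracking edge-path with the same endpoints inside its image. Every vertex on it has integer level $\geq r>k$, hence level $\geq k+1$; since the level function on any edge is the minimum of two affine functions of slope $\pm1$ (hence concave), every point of a fully crossed edge has level at least the minimum of the levels of its endpoints, so $\geq k+1$; and the terminal partial edge, entered through a vertex of level $\geq k+1$, has levels $\geq\min\{k+1,d_X(x_i,o)\}$. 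Concatenating the two regularized paths at $w$ yields a path from $x_1$ to $x_2$ avoiding $B(o,r')$ with $r'=\min\{k+1,d_X(x_1,o),d_X(x_2,o)\}>\ell$, so $R_o(x_1,x_2)\geq r'>\ell$, the desired contradiction. This is the same mechanism as the paper's argument that its connecting path $\gamma$ must pass through the upper endpoints $b,d$ of the relevant edges and stay at distance $\geq\bar r$ from $o$ (``otherwise there would be a vertex of $\gamma$ at distance $\bar r-1$ from $o$''). You correctly flagged this as the delicate step and named the right strategy, but without it the proof is incomplete, and it is the heart of the proof rather than a routine verification.
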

\begin{proof}
By \cite[Proposition 3.8]{Ker}, \cite[Proposition 4.2]{Ker} and \cite[Lemma 6.3]{Ker}, $(Y,d_Y)$ is an $\R$-tree, and the quotient map $g:X\to Y$ satisfies
\begin{align*}
d_X(x_1,x_2)-\Delta\leq d_Y(g(x_1),g(x_2))\leq d_X(x_1,x_2),\quad\forall x_1,x_2\in X^{(1)},
\end{align*}
for some constant $\Delta>0$. It only remains to show that $g(X^{(0)})$ contains all the branching points of $Y$. First observe that, setting $x_2=o$ in \eqref{def_d'}, we get
\begin{align*}
d_Y(g(x),g(o))=d_X(x,o),\quad\forall x\in X^{(1)},
\end{align*}
which shows that $g$ maps geodesic paths starting from $o$ into geodesic paths starting from $g(o)$. Now let $w\in Y$ be a branching point, and assume by contradiction that $w$ lies in $Y\setminus g(X^{(0)})$. Let
\begin{align*}
r=d_Y(g(o),w),
\end{align*}
which, by the discussion above, is not a natural number. Let $\bar{r}$ be the smallest natural number such that $r<\bar{r}$. Now let $A,B,C$ be three different path components of $Y\setminus\{w\}$ such that $g(o)\in A$, and let $u,v\in X^{(1)}$ such that $g(u)\in B$ and $g(v)\in C$. Fix $\varepsilon>0$ such that
\begin{align*}
r+\varepsilon<\min\left\{\bar{r},d_Y(g(o),g(u)),d_Y(g(o),g(v))\right\}.
\end{align*}
Since the only geodesic path joining $g(o)$ to $g(u)$ must pass through $w$, there exist $\tilde{w}_1,u'$ in a geodesic path joining $o$ to $u$ such that $g(\tilde{w}_1)=w$, $g(u')\in B$, and
\begin{align*}
d_Y(g(o),g(u'))=r+\varepsilon;
\end{align*}
see Figure \ref{Fig_Y-w}. Similarly, there are $\tilde{w}_2,v'$ in a geodesic path joining $o$ to $v$ such that $g(\tilde{w}_2)=w$, $g(v')\in C$, and
\begin{align*}
d_Y(g(o),g(v'))=r+\varepsilon.
\end{align*}
\begin{figure}[h]
\includegraphics[scale=1]{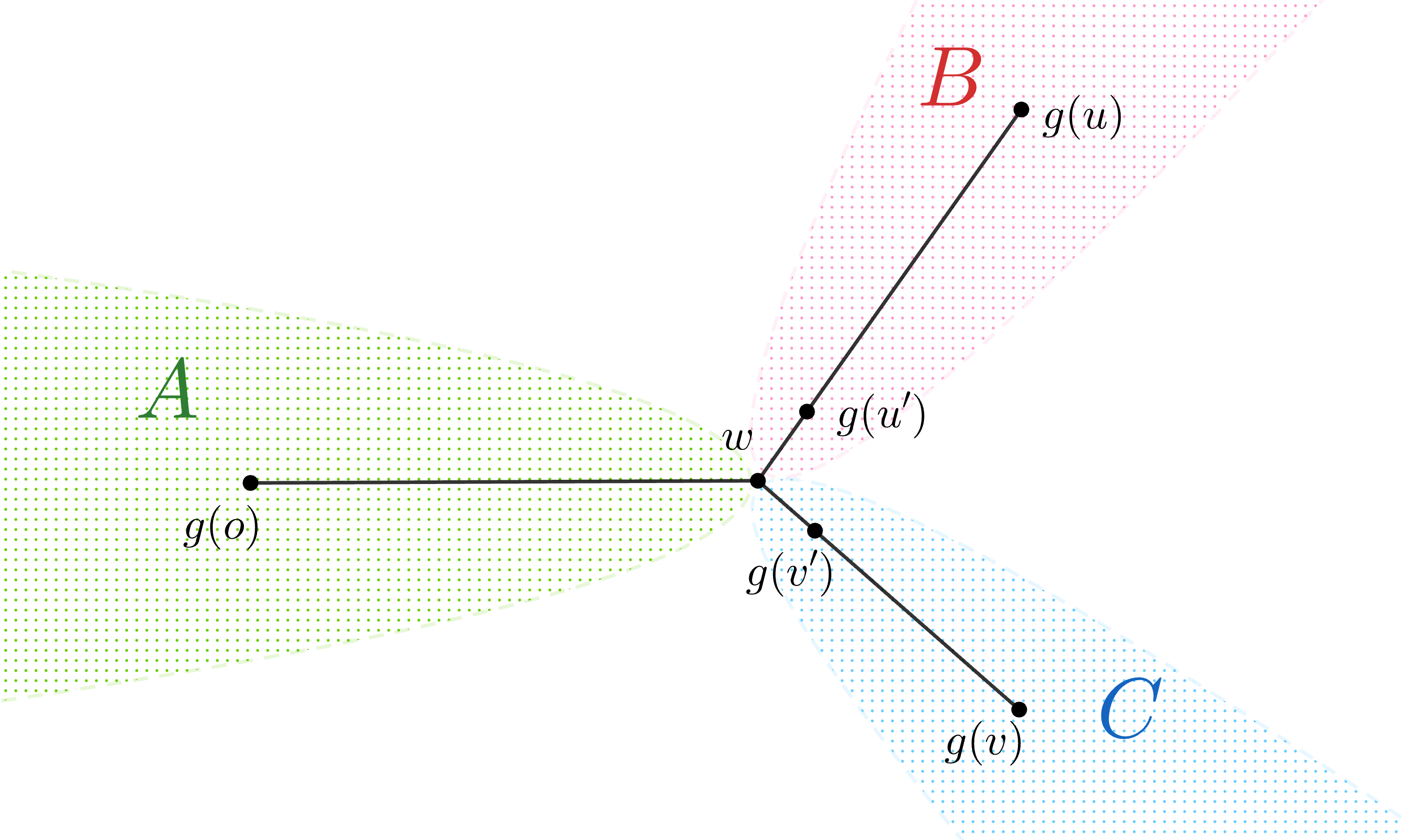}
\caption{Positions of $g(u'),g(v')$ in $Y\setminus\{w\}$.}
\label{Fig_Y-w}
\end{figure}
Observe that $\tilde{w}_1$ and $u'$ belong to the same edge $(a,b)$ in $X^{(1)}$, where
\begin{align*}
d_X(o,a)<r.
\end{align*}
Similarly, $\tilde{w}_2$ and $v'$ belong to the same edge $(c,d)$, and
\begin{align*}
d_X(o,c)<r.
\end{align*}
Hence $\tilde{w}_1\neq\tilde{w}_2$ because $u'\neq v'$. On the other hand, since $g(\tilde{w}_1)=g(\tilde{w}_2)$, we have
\begin{align*}
d'(\tilde{w}_1,\tilde{w}_2)=0,
\end{align*}
which means that there is a path $\gamma$ in $X^{(1)}\setminus B(o,r)$ joining $\tilde{w}_1$ to $\tilde{w}_2$. Then $\gamma$ must necessarily pass through $u'$ and $b$ because $d_X(o,a)<r$. Since $d_X(o,b)$ cannot be smaller than $r$, we have
\begin{align*}
d_X(o,b)=\bar{r}.
\end{align*}
Similarly,
\begin{align*}
d_X(o,d)=\bar{r}.
\end{align*}
Hence all the points lying in the portion of $\gamma$ joining $b$ to $d$ are at distance at least $\bar{r}$ from $o$ since otherwise there would be a vertex in $\gamma$ at distance $\bar{r}-1$ from $o$, which is impossible. This shows that
\begin{align*}
R_o(u',v')=d_X(o,u')=d_X(o,v'),
\end{align*}
which in turns shows that
\begin{align*}
d'(u',v')=0.
\end{align*}
This is impossible because $g(u')\neq g(v')$. We conclude that $w$ must belong to $g(X^{(0)})$.
\end{proof}

\section{{\bf The Lipschitz free space of a quasi-tree}}\label{S_Lip_qt}

In this Section, we prove Theorem \ref{Thm_LFSQT}. First we need the following lemma.

\begin{lem}\label{Lem_norm_Ker_phih}
Let $X$ be a simplicial quasi-tree, and let $(Y,d_Y)$, $\Delta$ and $g:X^{(1)}\to Y$ be as in Lemma \ref{Lem_Kerr+branch}. Let $h:g(X^{(0)})\to X^{(0)}$ be a right inverse for $g|_{X^{(0)}}$. Then $h$ is a $(1+\Delta)$-Lipschitz map. Moreover, if $\phi_h$ is defined as in \eqref{phi_g}, then, for every $f\in\Ker(\phi_h^*)$,
\begin{align*}
\frac{1}{2}\Lip(f)\leq\|f\|_\infty\leq\Delta\Lip(f).
\end{align*}
In particular, $\Ker(\phi_h^*)$ is isomorphic to $\ell^\infty(X^{(0)}\setminus h(g(X^{(0)})))$.
\end{lem}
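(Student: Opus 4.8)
The plan is to isolate first the single geometric fact that drives the whole statement: every vertex lies within distance $\Delta$ of the fibre representative selected by $h$. Since $h$ is a right inverse of $g|_{X^{(0)}}$, for each $v\in X^{(0)}$ we have $g(h(g(v)))=g(v)$, hence $d_Y(g(v),g(h(g(v))))=0$, and the lower estimate in \eqref{ineq_d_T} then gives $d_X(v,h(g(v)))\leq\Delta$. This inequality is the crux; both norm bounds will drop out of it. Throughout I take the base point so that $h(g(o))=o$, as in Corollary \ref{Cor_phi_g*}, so that $\Ker(\phi_h^*)$ consists exactly of the $f\in\Lip_o(X^{(0)})$ vanishing on $H:=h(g(X^{(0)}))$.

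For the Lipschitz constant of $h$, I would take distinct $y_1,y_2\in g(X^{(0)})$ and write $v_i=h(y_i)\in X^{(0)}$, so that $g(v_i)=y_i$. Because $R_o$ and the distances to $o$ are integer-valued on $X^{(0)}$, the pseudo-metric $d'$ from \eqref{def_d'}, and hence $d_Y$, takes integer values on $g(X^{(0)})$; as $y_1\neq y_2$ forces $d_Y(y_1,y_2)>0$, we get $d_Y(y_1,y_2)\geq 1$. Combining this with the upper estimate $d_X(h(y_1),h(y_2))\leq d_Y(y_1,y_2)+\Delta$ of \eqref{ineq_d_T} and the bound $\Delta\leq\Delta\, d_Y(y_1,y_2)$ yields $d_X(h(y_1),h(y_2))\leq(1+\Delta)d_Y(y_1,y_2)$, i.e. $h$ is $(1+\Delta)$-Lipschitz.

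Next I would dispatch the two norm estimates for $f\in\Ker(\phi_h^*)$. The lower bound is immediate: by Lemma \ref{Lem_Lip_graph} the Lipschitz constant is a supremum over edges, so for adjacent $x,y$ we have $|f(x)-f(y)|\leq|f(x)|+|f(y)|\leq 2\|f\|_\infty$, whence $\Lip(f)\leq 2\|f\|_\infty$. For the upper bound, I use the crux inequality: for every $v\in X^{(0)}$, since $h(g(v))\in H$ and $f$ vanishes on $H$,
\begin{align*}
|f(v)|=|f(v)-f(h(g(v)))|\leq\Lip(f)\,d_X(v,h(g(v)))\leq\Delta\,\Lip(f),
\end{align*}
so $\|f\|_\infty\leq\Delta\,\Lip(f)$; in particular every such $f$ is bounded.

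Finally, to identify the space, note that $f\in\Ker(\phi_h^*)$ vanishes on $H$ and is therefore determined by its restriction to $Z:=X^{(0)}\setminus H$, with $\|f|_Z\|_{\ell^\infty(Z)}=\|f\|_\infty$. Conversely, any bounded $\varphi:Z\to\R$, extended by $0$ on $H$, is Lipschitz with $\Lip\leq 2\|\varphi\|_\infty$ by the edge criterion, vanishes at $o\in H$, and lies in $\Ker(\phi_h^*)$. Thus $f\mapsto f|_Z$ is a linear bijection onto $\ell^\infty(Z)$, isometric for $\|\cdot\|_\infty$; since the two-sided bound shows $\Lip(\cdot)$ and $\|\cdot\|_\infty$ are equivalent on $\Ker(\phi_h^*)$, this map is a Banach-space isomorphism. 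The only step requiring genuine care is the Lipschitz bound for $h$: without the integrality argument giving $d_Y(y_1,y_2)\geq 1$ on distinct images of vertices, one would only conclude that $h$ is a coarse contraction rather than an honestly Lipschitz map, and the clean constant $1+\Delta$ would be lost.
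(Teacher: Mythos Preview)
Your proof is correct and follows essentially the same route as the paper's: the integrality of $d_Y$ on $g(X^{(0)})$ to upgrade the additive bound to a multiplicative one for $h$, the edge criterion of Lemma~\ref{Lem_Lip_graph} for $\Lip(f)\leq 2\|f\|_\infty$, and the fibre estimate $d_X(v,h(g(v)))\leq\Delta$ for $\|f\|_\infty\leq\Delta\Lip(f)$. Your version is slightly more explicit in isolating the crux inequality at the outset and in verifying surjectivity of the restriction map onto $\ell^\infty(Z)$, which the paper leaves implicit.
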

\begin{proof}
First observe that, by \eqref{ineq_d_T}, for every $y_1,y_2\in g(X^{(0)})$,
\begin{align*}
d_X(h(y_1),h(y_2))\leq d_Y(y_1,y_2) + \Delta.
\end{align*}
Since $d_Y$ takes integer values on $g(X^{(0)})$, this implies that
\begin{align*}
d_X(h(y_1),h(y_2))\leq (1+\Delta)d_Y(y_1,y_2),\quad\forall y_1,y_2\in g(X_0),
\end{align*}
so $h$ is indeed a Lipschitz map and $\Lip(h)\leq 1+\Delta$. By Lemma \ref{Lem_phi_g}, $\phi_h$ is well defined and $\|\phi_h\|\leq 1+\Delta$. Now let $f\in\Ker(\phi_h^*)$. This means that $f$ vanishes in $h(g(X^{(0)}))$. Hence, for every $x\in X_0\setminus h(g(X^{(0)}))$,
\begin{align*}
|f(x)| &= |f(x)-f(h(g(x)))|\\
&\leq \Lip(f) d_X(x,h(g(x)))\\
&\leq \Lip(f) \left(d_Y(g(x),g(x)) + \Delta\right)\\
&= \Lip(f)\Delta.
\end{align*}
Therefore $f$ can be identified with an element of $\ell^\infty(X\setminus h(g(X^{(0)})))$ such that
\begin{align*}
\|f\|_\infty\leq\Delta\Lip(f).
\end{align*}
On the other hand, for every $x_1,x_2\in X^{(0)}$ with $d(x_1,x_2)=1$,
\begin{align*}
|f(x_1)-f(x_2)|\leq |f(x_1)| + |f(x_2)| \leq 2 \|f\|_\infty.
\end{align*}
By Lemma \ref{Lem_Lip_graph}, we conclude that
\begin{align*}
\Lip(f)\leq 2\|f\|_\infty.
\end{align*}
\end{proof}

Now we are ready to prove Theorem \ref{Thm_LFSQT}.

\begin{proof}[Proof of Theorem \ref{Thm_LFSQT}]
Let $X$ be a countable simplicial quasi-tree. By Lemma \ref{Lem_Kerr+branch}, there exist an $\R$-tree $(Y,d_Y)$ and a $1$-Lipschitz map $g:X^{(0)}\to Y$ such that $g(X^{(0)})$ contains the set of branching points of $Y$. Moreover, by Lemma \ref{Lem_norm_Ker_phih}, $g$ admits a Lipschitz right-inverse $h:g(X^{(0)})\to X^{(0)}$ with $h(g(o))=o$. By Lemma \ref{Lem_phi_g},
\begin{align}\label{F(X)=F(g(X))+ker}
\mathcal{F}(X^{(0)})\approx \mathcal{F}(g(X^{(0)}))\oplus_1 \operatorname{Ker}(\phi_g).
\end{align}
On the other hand, since $Y$ is separable (because $X^{(0)}$ is countable), and $g(X^{(0)})$ is a countable, discrete subset of $Y$ containing all its branching points, by \cite[Corollary 3.4]{God}, $\mathcal{F}(g(X^{(0)}))$ is isometrically isomorphic to $\ell^1$. Finally, by Remark \ref{Rmk_Ker_phi*} and Lemma \ref{Lem_norm_Ker_phih},
\begin{align*}
\operatorname{Ker}(\phi_g)^*\approx\operatorname{Ker}(\phi_h^*)\approx \ell^\infty,
\end{align*}
which shows that $\operatorname{Ker}(\phi_g)\approx\ell^1$; see e.g. \cite{Gode2}. Hence \eqref{F(X)=F(g(X))+ker} becomes
\begin{align*}
\mathcal{F}(X^{(0)})\approx\ell^1\oplus_1\ell^1\cong\ell^1.
\end{align*}
\end{proof}

\section{{\bf Uniformly Lipschitz actions on $\ell^1$}}\label{S_act_l1}

Now we turn to the proofs of Theorem \ref{Thm_act_l1}, Corollary \ref{Cor_QT}, and Theorem \ref{Thm_acyl}. The following result was essentially proved in \cite[Theorem 7.2]{Gar}. Starting from an isometric action on a metric space $X$, it associates, in a canonical way, an isometric action on the Lipschitz free space $\mathcal{F}(X)$.

\begin{lem}\label{Lem_act_F(X)}
Let $(X,d,o)$ be a pointed metric space, and let $G$ be a group acting by isometries on $X$. Then $G$ has an isometric affine action $\sigma$ on $\mathcal{F}(X)$ such that
\begin{align*}
\|\sigma(s)0\|=d(s\cdot o,o),\quad\forall s\in G.
\end{align*}
\end{lem}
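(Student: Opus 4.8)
The plan is to build the affine action in the standard form $\sigma(s)\xi = \pi(s)\xi + b(s)$, where $\pi$ is a uniformly bounded (in fact isometric) representation and $b$ is a cocycle, and then read off the norm of $\sigma(s)0 = b(s)$. The naive guess $\delta_x \mapsto \delta_{s\cdot x}$ is not linear on $\mathcal{F}(X)$, since it sends $\delta_o = 0$ to $\delta_{s\cdot o} \neq 0$; the remedy is to split this assignment into a genuinely linear piece and an additive piece. Concretely, I would define $\pi(s)$ on the generators by $\pi(s)\delta_x = \delta_{s\cdot x} - \delta_{s\cdot o}$, equivalently $\pi(s)(\delta_x - \delta_y) = \delta_{s\cdot x} - \delta_{s\cdot y}$ on differences, and set $b(s) = \delta_{s\cdot o}$, so that $\sigma(s)\delta_x = \delta_{s\cdot x}$ recovers the geometric action on the copy of $X$ sitting inside $\mathcal{F}(X)$.

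The first step is to check that each $\pi(s)$ extends to a linear isometry of $\mathcal{F}(X)$. I would do this by duality with $\Lip_o(X)$: for a finitely supported $u = \sum_x a_x \delta_x$ and $f \in \Lip_o(X)$, the function $f^s := f\circ s - f(s\cdot o)$ again vanishes at $o$ and satisfies $\Lip(f^s) = \Lip(f)$ because $s$ is an isometry, and a short computation gives $\langle f, \pi(s)u\rangle = \langle f^s, u\rangle$. Since $f \mapsto f^s$ is a Lipschitz-constant-preserving bijection of $\Lip_o(X)$ (with inverse coming from $s^{-1}$), taking the supremum over $f$ yields $\|\pi(s)u\| = \|u\|$, so $\pi(s)$ is an isometry. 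The homomorphism property $\pi(st) = \pi(s)\pi(t)$ then follows from a direct computation on the generators, using $\pi(s)\delta_{t\cdot x} - \pi(s)\delta_{t\cdot o} = \delta_{st\cdot x} - \delta_{st\cdot o}$.

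Next I would verify that $b(s) = \delta_{s\cdot o}$ is a cocycle for $\pi$: indeed $\pi(s)b(t) + b(s) = (\delta_{st\cdot o} - \delta_{s\cdot o}) + \delta_{s\cdot o} = \delta_{st\cdot o} = b(st)$. With $\pi$ a representation and $b$ a cocycle, $\sigma(s)\xi = \pi(s)\xi + b(s)$ is an affine action, and it is isometric because its linear part $\pi(s)$ is an isometry. Finally $\sigma(s)0 = b(s) = \delta_{s\cdot o}$, and since $\delta_o = 0$ and $\delta$ is an isometric embedding of $X$ into $\mathcal{F}(X)$, we obtain $\|\sigma(s)0\| = \|\delta_{s\cdot o} - \delta_o\| = d(s\cdot o, o)$, as required.

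The only genuinely delicate point is the base-point bookkeeping in the first step: the free space is pointed, so one must confirm that the correction term $-\delta_{s\cdot o}$ really produces a map that is both linear on all of $\mathcal{F}(X)$ and isometric rather than merely contractive. This is exactly where the hypothesis that $s$ acts by isometries is used, through the fact that precomposition with $s$ preserves Lipschitz constants; a general Lipschitz $s$ would only yield a bounded $\pi(s)$ via Lemma \ref{Lem_phi_g}, not an isometric one.
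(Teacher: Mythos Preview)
Your proposal is correct and follows essentially the same argument as the paper: define $\pi(s)\delta_x=\delta_{s\cdot x}-\delta_{s\cdot o}$, check via the duality $\langle f,\pi(s)u\rangle=\langle f_s,u\rangle$ with $f_s(x)=f(s\cdot x)-f(s\cdot o)$ that $\pi(s)$ is an isometry (since $\Lip(f_s)=\Lip(f)$), verify the homomorphism and cocycle identities directly on generators, and conclude $\|\sigma(s)0\|=\|\delta_{s\cdot o}-\delta_o\|=d(s\cdot o,o)$. Your additional remark that $f\mapsto f_s$ is a bijection (justifying equality rather than just a one-sided bound) is a nice touch the paper leaves implicit.
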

\begin{proof}
Let $V$ denote the dense subspace of $\mathcal{F}(X)$ spanned by $\{\delta_x\ \mid\ x\in X\}$. For every $s\in G$, we define $\pi(s):V\to V$ by
\begin{align*}
\pi(s)\delta_x=\delta_{s\cdot x}-\delta_{s\cdot o}, \quad\forall x\in X.
\end{align*}
Let $u:X\to\R$ be a finitely supported function, and let $f\in\Lip_o(X)$. For all $s\in G$,
\begin{align*}
\left\langle f,\pi(s)\left(\sum_x u(x)\delta_x\right)\right\rangle &= \sum_x u(x)\left(f(s\cdot x)-f(s\cdot o)\right)\\
&= \left\langle f_s,\left(\sum_x u(x)\delta_x\right)\right\rangle,
\end{align*}
where $f_s(x)=f(s\cdot x)-f(s\cdot o)$. Observing that
\begin{align*}
\frac{|f_s(x)-f_s(y)|}{d(x,y)}=\frac{|f(s\cdot x)-f(s\cdot y)|}{d(s\cdot x,s\cdot y)},
\end{align*}
we see that $\Lip(f_s)=\Lip(f)$. Therefore $\pi(s)$ extends to an isometry on $\mathcal{F}(X)$. Furthermore, for all $s,t\in G$ and $x\in X$,
\begin{align*}
\pi(st)\delta_x&=\delta_{st\cdot x}-\delta_{s\cdot o}+\delta_{s\cdot o}-\delta_{st\cdot o}\\
&=\pi(s)\left(\delta_{t\cdot x}-\delta_{t\cdot o}\right)\\
&=\pi(s)\pi(t)\delta_x,
\end{align*}
which shows that $\pi$ defines an isometric representation on $\mathcal{F}(X)$. Now define $b:G\to\mathcal{F}(X)$ by
\begin{align*}
b(s)=\delta_{s\cdot o},\quad\forall s\in G.
\end{align*}
Then, for all $s,t\in G$,
\begin{align*}
b(st)&=\delta_{st\cdot o}-\delta_{s\cdot o}+\delta_{s\cdot o}\\
&=\pi(s)b(t)+b(s).
\end{align*}
Hence $b$ is a cocycle for $\pi$. Finally, since $\delta_o=0$, for all $s\in G$,
\begin{align*}
\|b(s)\|=\|\delta_{s\cdot o}-\delta_o\|=d(s\cdot o,o).
\end{align*}
Defining 
\begin{align*}
\sigma(s)v=\pi(s)v+b(s),\quad\forall s\in G,\ \forall v\in\mathcal{F}(X),
\end{align*}
we obtain the desired conclusion.
\end{proof}

Now we are ready to prove our main results.

\begin{proof}[Proof of Theorem \ref{Thm_act_l1}]
Let $G$ be a finitely generated group acting properly by isometries on a product of quasi-trees $X=X_1\times\cdots\times X_N$, and let us fix a point $o=(o_1,\ldots,o_N)$  in $X$. By \cite[Proposition 10.5]{But}, $G$ has a finite-index subgroup $H$ such that the restriction of this action to $H$ preserves each factor. More precisely, for each $i$ in $\{1,\ldots,N\}$, there is an isometric action $H\curvearrowright X_i$ such that the action on the product is given by
\begin{align*}
s\cdot(x_1,\ldots,x_N) = (s\cdot x_1,\ldots,s\cdot x_N),\quad\forall s\in H,\ \forall (x_1,\ldots,x_N)\in X.
\end{align*}
Let $d_i$ denote the distance on $X_i$. By Proposition \ref{Prop_Manning}, there is a countable simplicial quasi-tree $(\tilde{X}_i,\tilde{d}_i)$ endowed with an isometric action of $H$, an $H$-equivariant map $g_i:\tilde{X}_i\to X_i$, and a constant $C_i>0$ such that
\begin{align*}
\tilde{d}_i(s\cdot\tilde{o}_i,\tilde{o}_i)\geq\frac{1}{C_i}d_i(s\cdot o_i,o_i)-C_i, \quad\forall s\in H,
\end{align*}
where $g_i(\tilde{o}_i)=o_i$. On the other hand, by Lemma \ref{Lem_act_F(X)}, $H$ has an isometric affine action $\sigma_i$ on $\mathcal{F}(\tilde{X}_i)$ such that
\begin{align*}
\|\sigma_i(s)0\|=\tilde{d}_i(s\cdot\tilde{o}_i,\tilde{o}_i),\quad \forall s\in H.
\end{align*}
We can therefore define an isometric affine action $\sigma$ on
\begin{align*}
E=\mathcal{F}(\tilde{X}_1)\oplus_1\cdots\oplus_1\mathcal{F}(\tilde{X}_N)
\end{align*}
by
\begin{align*}
\sigma(s)(v_1,\ldots,v_N)=(\sigma_1(s)v_1,\ldots,\sigma_N(s)v_N),\quad\forall s\in H,\ \forall (v_1,\ldots,v_N)\in E.
\end{align*}
Observe that
\begin{align*}
\|\sigma(s)0\|_E &= \sum_{i=1}^N \|\sigma_i(s)0\|_{\mathcal{F}(\tilde{X}_i)}\\
&= \sum_{i=1}^N \tilde{d}_i(s\cdot\tilde{o}_i,\tilde{o}_i)\\
&\geq \frac{1}{C}\left(\sum_{i=1}^N d_i(s\cdot o_i,o_i)\right) -NC\\
&= \frac{1}{C} d(s\cdot o,o) - NC,
\end{align*}
where $C=\max\{C_1,\ldots,C_N\}$. Now, by Theorem \ref{Thm_LFSQT}, $\mathcal{F}(\tilde{X}_i)$ is isomorphic to $\ell^1$. Hence
\begin{align*}
E\approx\ell^1\oplus_1\cdots\oplus_1\ell^1\cong\ell^1.
\end{align*}
Let $\Psi:E\to\ell^1$ be an isomorphism. Then $\sigma_\Psi=\Psi\sigma(\,\cdot\,)\Psi^{-1}$ defines a uniformly Lipschitz affine action of $H$ on $\ell^1$. Moreover, there is a constant $\tilde{C}>0$ such that
\begin{align*}
\|\sigma_\Psi(s)0\|_1\geq\frac{1}{\tilde{C}} d(s\cdot o,o) - \tilde{C},\quad\forall s\in H.
\end{align*}
Finally, by Lemma \ref{Lem_fi_induc}, the same holds for $G$.
\end{proof}

\begin{proof}[Proof of Corollary \ref{Cor_QT}]
Let $G$ be a group with Property (QT), and let $X$ be a product of quasi-trees endowed with an isometric action of $G$ such that the orbit maps are quasi-isometric embeddings. By Theorem \ref{Thm_act_l1}, there is a uniformly Lipschitz affine action $\sigma$ of $G$ on $\ell^1$, and a constant $C>0$ such that
\begin{align*}
\|\sigma(s)0\|_1\geq\frac{1}{C}d(s\cdot o,o) - C,\quad\forall s\in G,
\end{align*}
for some $o\in X$. Let $|\cdot|$ be a word-legth on $G$. Since the orbit maps are quasi-isometric embeddings, there is a constant $A>0$ such that
\begin{align*}
d(s\cdot o,o) \geq\frac{1}{A}|s| - A,\quad\forall s\in G,
\end{align*}
Therefore, for every $v\in\ell^1$, and every $s\in G$,
\begin{align*}
\|\sigma(s)v\|_1 &\geq \|\sigma(s)0\|_1 - \|\sigma(s)v-\sigma(s)0\|_1\\
&\geq \frac{1}{C}d(s\cdot o,o) - C - \frac{1}{B}\|v\|_1\\
&\geq \frac{1}{AC}|s|  - \left(\frac{A}{C} + C + \frac{1}{B}\|v\|_1\right),
\end{align*}
for some $B>0$, which exists because $\sigma$ is uniformly Lipschitz. This gives us one inequality in the definition of quasi-isometric embedding; see \eqref{qi_embed}. The other one is always satisfied; see e.g. \cite[Proposition 2.10]{GueKam}. We conclude that the orbit maps of $\sigma$ are quasi-isometric embeddings.
\end{proof}

\begin{proof}[Proof of Theorem \ref{Thm_acyl}]
Let $G$ be an acylindrically hyperbolic group. By \cite[Theorem 1.7]{Bal}, $G$ has an isometric action on a quasi-tree with unbounded orbits. By Theorem \ref{Thm_act_l1}, this implies that $G$ has a uniformly Lipschitz affine action on $\ell^1$ with unbounded orbits.
\end{proof}

\section{{\bf $3$-manifold groups}}\label{S_3-man}

Now we focus on $3$-manifold groups and the proof of Theorem \ref{Thm_3-man}. The ideas developed in this section were kindly communicated to the author by John Mackay.

Let $M$ be a connected, compact, orientable $3$-manifold. It was shown in \cite{HaNgYa} that $\pi_1(M)$ splits as a free product of amenable groups and a group with Property (QT). The proof of Theorem \ref{Thm_3-man} follows from the fact that all these groups act properly on $\ell^1$.

Let $\Gamma$ and $\Lambda$ be two groups with identity elements $e_\Gamma$ and $e_\Lambda$ respectively. The free product $\Gamma\ast\Lambda$ is the group of reduced words with letters in $\Gamma\setminus\{e_\Gamma\}$ and $\Lambda\setminus\{e_\Lambda\}$, endowed with the concatenation product (with reduction); see e.g. \cite[\S 2.3.2]{Loh} for the formal definition. Every nontrivial element of $\Gamma\ast\Lambda$ can be written in a unique way as
\begin{align*}
s_1\cdots s_n,
\end{align*}
where each $s_i$ belongs to either $\Gamma\setminus\{e_\Gamma\}$ or $\Lambda\setminus\{e_\Lambda\}$, and no two consecutive letters belong to the same factor. The identity element of $\Gamma\ast\Lambda$ is the empty word.

Every free product $G=\Gamma\ast\Lambda$ admits a natural action on its Bass--Serre tree. We briefly describe this construction here; for more details, we refer the reader to \cite{Ser} or \cite[\S 4]{Yan}. The set of vertices is given by all the cosets of the form $s\Gamma$ and $s\Lambda$ for $s\in G$. The set of edges is identified with $G$. More precisely, all edges are of the form $\{s\Gamma,s\Lambda\}$ for $s\in G$. The graph thus defined is a tree, and $G$ acts on it by left multiplication.

The following result has appeared in several different forms in the literature; see e.g. \cite{AntDre} and \cite{ChaDah}. Since none of them fits exactly our setting, we include its proof here for completeness. Recall that, given a set $X$, a Banach space $E$, and $p\in[1,\infty)$, the space $\ell^p(X; E)$ is defined as the space of functions $f:X\to E$ such that the sum
\begin{align*}
\sum_{x\in X}\|f(x)\|_E^p
\end{align*}
is finite. Observe that, if $E=\ell^p$ and $X$ is countable, then
\begin{align*}
\ell^p(X; E)\cong \ell^p,
\end{align*}
for the natural norm on $\ell^p(X; E)$.

\begin{lem}\label{Lem_free_prod}
Let $\Gamma$ and $\Lambda$ be two countable groups admitting proper isometric affine actions on Banach spaces $E$ and $F$ respectively. Then, for every $p\in[1,\infty)$, the free product $\Gamma\ast\Lambda$ has a proper isometric affine action on the Banach space $\ell^p(T;E\oplus_p F\oplus_p\ell^p)$, where $T$ denotes the Bass--Serre tree of $\Gamma\ast\Lambda$.
\end{lem}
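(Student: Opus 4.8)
The plan is to exploit the action of $G=\Gamma\ast\Lambda$ on its Bass--Serre tree $T$, building the desired affine action as an $\ell^p$-sum of three pieces assembled inside $\ell^p(T;E\oplus_p F\oplus_p\ell^p)$, which I index by the vertex set $V(T)=V_\Gamma\sqcup V_\Lambda$ with $V_\Gamma\cong G/\Gamma$ and $V_\Lambda\cong G/\Lambda$. The two ``coordinate'' Banach-space slots will record how large the syllables of a reduced word are, while the $\ell^p$-slot will record how many syllables it has. Concretely, on the $E$-slot over $V_\Gamma$ I put the action of $G$ induced from the given proper action $\sigma_\Gamma$ of $\Gamma$ on $E$; on the $F$-slot over $V_\Lambda$ the action induced from $\sigma_\Lambda$ on $F$; and on the $\ell^p$-slot a standard proper cocycle coming from the tree metric. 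On the remaining ``unused'' summands $G$ acts by the permutation representation with zero cocycle, so that the orbit of $0$ stays inside the three active pieces. The guiding principle is that these three quantities together reconstitute the length of $s$, forcing properness.

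For the induced pieces I follow the scheme of Lemma~\ref{Lem_fi_induc}, now with infinite index. Fix a section $\omega:G/\Gamma\to G$ with $\omega(\Gamma)=e$ and let $\alpha(s,x)=\omega(sx)^{-1}s\omega(x)\in\Gamma$ be the associated cocycle as in \eqref{cocy_fi_sg}. I would define $\pi^{\mathrm{ind}}(s)$ on $\ell^p(V_\Gamma;E)$ by $(\pi^{\mathrm{ind}}(s)f)(x)=\pi_E(\alpha(s,s^{-1}x))f(s^{-1}x)$ and the cocycle by $\tilde b_E(s)(x)=b_E(\alpha(s,s^{-1}x))$, where $\sigma_\Gamma=\pi_E(\cdot)+b_E$. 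Exactly as in Lemma~\ref{Lem_fi_induc}, the cocycle identity for $\alpha$ makes $\tilde b_E$ a cocycle for $\pi^{\mathrm{ind}}$, and since $\pi_E$ is by isometries and $s^{-1}$ permutes $V_\Gamma$, one gets $\|\pi^{\mathrm{ind}}(s)f\|_p^p=\sum_{x}\|f(s^{-1}x)\|_E^p=\|f\|_p^p$, so $\pi^{\mathrm{ind}}$ is isometric. The same construction over $V_\Lambda$ produces $\pi^{\mathrm{ind}}_F$ and $\tilde b_F$.

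Next, the tree cocycle. Since edge stabilizers in a free product are trivial, $E(T)\cong G$ is a free $G$-set, and the permutation representation on $\ell^p(E(T))$ together with $c(s)=\mathbf{1}_{[v_0,s\cdot v_0]}$ (the indicator of the edges on the geodesic from a base vertex $v_0$ to $s\cdot v_0$) is the standard proper isometric affine action associated with a tree, with $\|c(s)\|_p^p=d_T(v_0,s\cdot v_0)$. To place it in the $\ell^p$-slot of $\ell^p(T;E\oplus_p F\oplus_p\ell^p)$ I use the $G$-equivariant isometric identification $\ell^p(E(T))\cong\ell^p(V_\Gamma;\ell^p)$ induced by the map sending an edge $\{g\Gamma,g\Lambda\}$ to its $\Gamma$-endpoint $g\Gamma$ (its fibres being cosets of $\Gamma$, which are countable), followed by the inclusion $\ell^p(V_\Gamma;\ell^p)\hookrightarrow\ell^p(V(T);\ell^p)$.

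The total affine action $\sigma$ then satisfies $\|\sigma(s)0\|^p=\|\tilde b_E(s)\|_p^p+\|\tilde b_F(s)\|_p^p+d_T(v_0,s\cdot v_0)$. The heart of the argument --and the step I expect to be the main obstacle-- is the Bass--Serre normal-form computation showing that $\tilde b_E(s)$ is \emph{finitely supported} (hence genuinely lies in $\ell^p(V_\Gamma;E)$) and that, for $s=s_1\cdots s_n$ in reduced form, $\|\tilde b_E(s)\|_p^p=\sum_{\{i:\,s_i\in\Gamma\}}\|b_E(s_i)\|_E^p$, the values $\alpha(s,s^{-1}x)$ reading off precisely the $\Gamma$-syllables of $s$ along the geodesic $[v_0,s\cdot v_0]$ (and symmetrically for $\Lambda$); one also needs the comparison $d_T(v_0,s\cdot v_0)\geq n-1$. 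Granting these, properness is immediate: if $\|\sigma(s)0\|\leq r$ then $n$ is bounded in terms of $r$, while each syllable $s_i$ satisfies $\|b_E(s_i)\|_E\leq r$ or $\|b_F(s_i)\|_F\leq r$, so by properness of $\sigma_\Gamma$ and $\sigma_\Lambda$ each syllable ranges over a finite set; hence $s$ ranges over the finite set of reduced words of bounded length with syllables in a fixed finite set. This yields the required proper isometric affine action on $\ell^p(T;E\oplus_p F\oplus_p\ell^p)$.
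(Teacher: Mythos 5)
Your architecture (induced affine actions in the $E$-slot over $V_\Gamma$ and the $F$-slot over $V_\Lambda$, plus a tree cocycle in the $\ell^p$-slot) can be made to work, but the proposal has a genuine gap, and it sits exactly where you yourself place ``the heart of the argument''. The claim that $\tilde b_E(s)$ is finitely supported is not merely deferred: as stated, for an arbitrary section $\omega:G/\Gamma\to G$ with $\omega(\Gamma)=e$, it is \emph{false}, so your cocycle need not take values in $\ell^p(V_\Gamma;E)$ at all. Indeed, replacing a section $\omega$ by $\omega'(y)=\omega(y)\gamma_y$ with $\gamma_y\in\Gamma$ changes the cocycle to $\alpha'(s,y)=\gamma_{sy}^{-1}\alpha(s,y)\gamma_y$; choosing, on infinitely many $\langle s\rangle$-orbits in $G/\Gamma$, values $\gamma_y=\gamma_0$ and $\gamma_{sy}=e$ with $b_E(\gamma_0)\neq 0$ makes $\sum_y\|b_E(\alpha'(s,y))\|^p$ diverge. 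This issue is invisible in Lemma~\ref{Lem_fi_induc} because there the sum over $G/H$ is finite; in infinite index the choice of section is not a harmless normalisation. The repair is to take the normal-form section: $\omega(y)$ is the unique coset representative which, as a reduced word, is empty or ends in a letter of $\Lambda\setminus\{e_\Lambda\}$. With that choice your claims do hold: for $s=s_1\cdots s_n$ reduced, a cancellation analysis shows that $\alpha(s,y)\neq e_\Gamma$ only when $\omega(y)=(s_{k+1}\cdots s_n)^{-1}$ for some $k$ with $s_k\in\Gamma$, in which case $\alpha(s,y)=s_k$ and the corresponding vertex is $x=sy=s_1\cdots s_k\Gamma$ on the geodesic $[v_0,s\cdot v_0]$; this gives finite support, the formula $\|\tilde b_E(s)\|_p^p=\sum_{\{i:\,s_i\in\Gamma\}}\|b_E(s_i)\|_E^p$, and then your properness argument is correct. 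So the picture you describe is the right one, but the proof of the central step is missing, and the step is only true after a specific choice of section that the proposal never makes.

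It is worth noting how the paper avoids this difficulty, since its route is genuinely different at exactly this point: it uses no sections and no induced representations. It takes the representation $\pi_E\oplus\pi_F\oplus\lambda$ of $\Gamma\times\Lambda$, pulls it back to $G$ through the natural homomorphism $\Gamma\ast\Lambda\to\Gamma\times\Lambda$, twists by the permutation action on $T$ to get $\tilde\pi(y)f(x)=\pi(y)f(y^{-1}x)$, and then \emph{defines} the cocycle inductively on reduced words by the cocycle identity $\tilde b(sy)=\tilde\pi(s)\tilde b(y)+\tilde b(s)$, starting from explicit one-vertex-supported values on $\Gamma$ and on $\Lambda$. Well-definedness and finite support are then automatic (uniqueness of the normal form legitimises the induction), and the norm identity $\|\tilde b(s_1\cdots s_n)\|^p=\sum_i\|b(s_i)\|^p+2n$ is a one-step induction; the $2n$, produced by the regular-representation summands $\delta_{s_i}-\delta_{e}$, plays exactly the role of your tree-distance term $d_T(v_0,s\cdot v_0)$. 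Finally, a small but necessary repair on your side: the unsigned indicator $c(s)=\mathbf{1}_{[v_0,s\cdot v_0]}$ is \emph{not} a cocycle for the permutation representation on $\ell^p(E(T))$, since $c(s)+\lambda(s)c(t)-c(st)$ equals twice the indicator of the overlap $[m,s\cdot v_0]$, where $m$ is the median of $v_0$, $s\cdot v_0$, $st\cdot v_0$; you must work with oriented edges and signed indicators (antisymmetric functions on directed edges), after which $\|c(s)\|_p^p=d_T(v_0,s\cdot v_0)$ and the rest of your argument survives.
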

\begin{proof}
Let $G=\Gamma\ast\Lambda$. By hypothesis, there are isometric representations $\pi_E:\Gamma\to\operatorname{GL}(E)$, $\pi_F:\Lambda\to\operatorname{GL}(F)$, and proper cocycles $b_E:\Gamma\to E$, $b_F:\Lambda\to F$ associated to these representations. Let us now fix $p\in[1,\infty)$, and let $\lambda:\Gamma\times\Lambda\to\operatorname{GL}(\ell^p(\Gamma\times\Lambda))$ be the left regular representation:
\begin{align*}
\lambda(s_1,t_1)\xi(s_2,t_2)=\xi(s_1^{-1}s_2,t_1^{-1}t_2),\quad\forall s_1,s_2\in\Gamma,\ \forall t_1,t_2\in\Lambda,\ \forall\xi\in\ell^p(\Gamma\times\Lambda).
\end{align*}
Then we have an isometric representation of $\Gamma\times\Lambda$ on $E\oplus_p F\oplus_p\ell^p(\Gamma\times\Lambda)$ given by
\begin{align*}
(s,t)\in\Gamma\times\Lambda\quad\longmapsto\quad \pi_E(s)\oplus\pi_F(t)\oplus\lambda(s,t)\in \operatorname{GL}(E\oplus_p F\oplus_p\ell^p(\Gamma\times\Lambda)).
\end{align*}
Moreover, by concatenation, this gives an isometric representation $\pi:G\to\operatorname{GL}(E\oplus_p F\oplus_p\ell^p(\Gamma\times\Lambda))$. Now define
\begin{align*}
\tilde{E}=\ell^p(T;E\oplus_p F\oplus_p\ell^p(\Gamma\times\Lambda)),
\end{align*}
where $T$ denotes the Bass--Serre tree of $G$. Again, we can define an isometric representation $\tilde{\pi}:G\to\operatorname{GL}(\tilde{E})$ by
\begin{align*}
\tilde{\pi}(y)f(x)=\pi(y)f(y^{-1}x),\quad\forall y\in G,\ \forall f\in\tilde{E},\ \forall x\in T.
\end{align*}
Finally, we define $\tilde{b}:G\to\tilde{E}$ inductively as follows. For $s\in\Gamma$ and $x\in T$, we set
\begin{align*}
\tilde{b}(s)(x)=\begin{cases}
(b_E(s),0,\delta_s-\delta_{e_\Gamma},0), & \text{if } x=\Gamma,\\
(0,0,0,0), & \text{otherwise.}
\end{cases}
\end{align*}
Observe that the map $(s,t)\mapsto(\delta_s-\delta_{e_\Gamma},0)$ is a cocycle for $\lambda$. Similarly, for $t\in\Lambda$ and $x\in T$, define
\begin{align*}
\tilde{b}(t)(x)=\begin{cases}
(0,b_F(t),0,\delta_t-\delta_{e_\Lambda}), & \text{if } x=\Lambda,\\
(0,0,0,0), & \text{otherwise.}
\end{cases}
\end{align*}
Now assume that $\tilde{b}(y)$ has already been defined for some $y\in G$ such that its first letter is in $\Lambda$. Then, for every $s\in\Gamma$, we set
\begin{align*}
\tilde{b}(sy)=\tilde{\pi}(s)\tilde{b}(y)+\tilde{b}(s).
\end{align*}
If the first letter of $y$ is in $\Gamma$, we define $\tilde{b}(ty)$ analogously for every $t\in\Lambda$. This procedure allows us to define $\tilde{b}$ on every element of $G$. Moreover, it satisfies the cocycle identity by construction. Finally, by induction, one sees that
\begin{align*}
\|\tilde{b}(s_1\cdots s_n)\|_{\tilde{E}}^p=\sum_{i=1}^n\|b(s_i)\|^p + 2n,
\end{align*}
for every reduced word $s_1\cdots s_n$ in $G$, where
\begin{align*}
b(s_i)=\begin{cases}
b_E(s_i), & s_i\in\Gamma,\\
b_F(s_i), & s_i\in\Lambda.
\end{cases}
\end{align*}
This shows that $\tilde{b}$ is proper because both $b_E$ and $b_F$ are proper.
\end{proof}

No we apply Lemma \ref{Lem_free_prod} to uniformly Lipschitz actions on $\ell^p$. 
%Every separable, infinite dimensional $L^p$ space is isomorphic to either $L^p([0,1])$ or $\ell^p$; see the beginning of Chapter 6 in \cite{AlbKal}. Hence we can restrict ourselves to these two cases. We let $L^p$ denote the space $L^p([0,1])$.

\begin{cor}\label{Cor_free_prod}
Let $p\in[1,\infty)$, and let $\Gamma$ and $\Lambda$ be two countable groups.
\begin{itemize}
\item[a)] If $\Gamma$ and $\Lambda$ admit proper isometric affine actions on $\ell^p$, then so does $\Gamma\ast\Lambda$.
\item[b)] If $\Gamma$ and $\Lambda$ admit proper uniformly Lipschitz affine actions on $\ell^p$, then so does $\Gamma\ast\Lambda$.
\end{itemize}
\end{cor}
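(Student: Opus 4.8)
The plan is to derive both statements from Lemma~\ref{Lem_free_prod}, using only the elementary identifications $\ell^p\oplus_p\ell^p\oplus_p\ell^p\cong\ell^p$ and $\ell^p(T;\ell^p)\cong\ell^p$, valid whenever $T$ is countable. Observe first that the Bass--Serre tree $T$ of $\Gamma\ast\Lambda$ is countable, since its edge set is identified with $\Gamma\ast\Lambda$, which is countable when $\Gamma$ and $\Lambda$ are.

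For part a), I would feed the given proper isometric affine actions into Lemma~\ref{Lem_free_prod} with $E=F=\ell^p$. This produces a proper isometric affine action on the space $\ell^p(T;\ell^p\oplus_p\ell^p\oplus_p\ell^p)$. Because $\ell^p\oplus_p\ell^p\oplus_p\ell^p\cong\ell^p$ and $T$ is countable, this space is isometrically isomorphic to $\ell^p$, and transporting the action along the isometry preserves both the isometric character and properness.

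For part b), the difficulty is that Lemma~\ref{Lem_free_prod} requires \emph{isometric} affine actions, whereas here the linear parts are merely uniformly bounded. I would remove this gap by a renorming argument. Writing a proper uniformly Lipschitz affine action of $\Gamma$ as $\sigma(s)v=\pi(s)v+b(s)$ with $C:=\sup_{s\in\Gamma}\|\pi(s)\|<\infty$, set
\[
\|v\|_\Gamma=\sup_{s\in\Gamma}\|\pi(s)v\|_p.
\]
This is a norm equivalent to $\|\cdot\|_p$, as $\|v\|_p\le\|v\|_\Gamma\le C\|v\|_p$, so that $E:=(\ell^p,\|\cdot\|_\Gamma)$ is again a Banach space, and $\pi$ acts isometrically on it since $\|\pi(t)v\|_\Gamma=\sup_{s}\|\pi(st)v\|_p=\|v\|_\Gamma$. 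The cocycle identity for $b$ does not involve the norm, and properness survives because $\|b(s)\|_\Gamma\ge\|b(s)\|_p$. Hence $\Gamma$ admits a proper isometric affine action on $E\approx\ell^p$, and likewise $\Lambda$ on $F:=(\ell^p,\|\cdot\|_\Lambda)\approx\ell^p$.

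It then remains to apply Lemma~\ref{Lem_free_prod} to these renormed actions, yielding a proper isometric affine action on $\ell^p(T;E\oplus_p F\oplus_p\ell^p)$. Since $E$ and $F$ are isomorphic to $\ell^p$, the fibre $V:=E\oplus_p F\oplus_p\ell^p$ is isomorphic to $\ell^p$; fixing an isomorphism $V\to\ell^p$ and applying it in each coordinate gives an isomorphism $\ell^p(T;V)\to\ell^p(T;\ell^p)\cong\ell^p$. Conjugating the isometric action by this isomorphism produces a uniformly Lipschitz affine action on $\ell^p$, still proper because properness is an isomorphism invariant. The main obstacle is precisely this passage from the uniformly Lipschitz to the isometric regime; once the renorming trick is in place, the remainder is a routine bookkeeping of $\ell^p$-type spaces.
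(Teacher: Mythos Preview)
Your proposal is correct and follows essentially the same route as the paper: for (b) you apply the standard renorming $\|v\|_\Gamma=\sup_{s}\|\pi(s)v\|_p$ to make the linear parts isometric, invoke Lemma~\ref{Lem_free_prod}, and then use the identifications $E\oplus_p F\oplus_p\ell^p\approx\ell^p$ and $\ell^p(T;\ell^p)\cong\ell^p$; for (a) all isomorphisms become isometric. Your write-up even adds a couple of useful details (the countability of $T$, and the explicit check that properness survives the renorming) that the paper leaves implicit.
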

\begin{proof}
Let us prove (b). By hypothesis, there are uniformly bounded representations $\pi_\Gamma:\Gamma\to\operatorname{GL}(\ell^p)$, $\pi_\Lambda:\Lambda\to\operatorname{GL}(\ell^p)$, and proper cocycles $b_\Gamma:\Gamma\to \ell^p$, $b_\Lambda:\Lambda\to \ell^p$. Let $E$ denote the space $\ell^p$ endowed with the equivalent norm
\begin{align*}
\|v\|_E=\sup_{s\in\Gamma}\|\pi_\Gamma(s)v\|_p.
\end{align*}
Then $\pi_\Gamma$ defines an isometric representation on $E$, and $b_\Gamma$ is still a proper cocycle for this representation. We define $F$ similarly, with the norm
\begin{align*}
\|v\|_F=\sup_{t\in\Lambda}\|\pi_\Lambda(t)v\|_p.
\end{align*}
By Lemma \ref{Lem_free_prod}, $\Gamma\ast\Lambda$ has a proper isometric affine action on $\ell^p(T; E\oplus_p F\oplus_p \ell^p)$, where $T$ is the Bass--Serre tree of $\Gamma\ast\Lambda$. Now observe that
\begin{align*}
E\oplus_p F\oplus_p \ell^p \approx \ell^p\oplus_p \ell^p\oplus_p \ell^p \cong \ell^p.
\end{align*}
This, in turn, shows that
\begin{align*}
\ell^p(T; E\oplus_p F\oplus_p \ell^p)\approx \ell^p.
\end{align*}
We conclude that $\Gamma\ast\Lambda$ has a proper uniformly Lipschitz affine action on $\ell^p$. The proof of (a) follows analogously by noticing that all the isomorphisms are isometric in that case.
\end{proof}

With this, we can prove Theorem \ref{Thm_3-man}.

\begin{proof}[Proof of Theorem \ref{Thm_3-man}]
Let $G=\pi_1(M)$, where $M$ is a connected, compact, orientable 3-manifold. By (the proof of) \cite[Theorem 1.1]{HaNgYa}, $G$ decomposes as $G=\Gamma\ast\Lambda$, where $\Gamma$ is a group with Property (QT), and
\begin{align*}
\Lambda=N_1\ast\cdots\ast N_k\ast S_1\ast\cdots\ast S_l,
\end{align*}
where $N_1,\ldots,N_k$ are virtually nilpotent groups, and $S_1,\ldots,S_l$ are solvable groups. In particular, these groups are amenable. By \cite[Theorem 42]{Now}, they admit proper isometric affine actions on $\ell^1$. Hence, by Corollary \ref{Cor_free_prod}(a), so does $\Lambda$. On the other hand, by Corollary \ref{Cor_QT}, $\Gamma$ has a proper uniformly Lipschitz affine action on $\ell^1$. Therefore, by Corollary \ref{Cor_free_prod}(b), $G$ has a proper uniformly Lipschitz affine action on $\ell^1$.
\end{proof}

\bibliographystyle{plain} 

\bibliography{Bibliography}

\end{document}